\numberwithin{equation}{section} 
\theoremstyle{plain}
\newtheorem{exam}{Example}[section]
\newtheorem{theorem}[exam]{Theorem}
\newtheorem{lemma}[exam]{Lemma}
\newtheorem{remark}[exam]{Remark}
\newtheorem{definition}[exam]{Definition}
\newtheorem{corollary}[exam]{Corollary}
\begin{document}
\title
{
Higher regularity of homeomorphisms in the Hartman-Grobman theorem and a conjecture on its sharpness
\footnote{ This paper was jointly supported from the National Natural
Science Foundation of China under Grant (No. 11931016 and 11671176) and  Grant Fondecyt 1170466.}
}
\author
{
Weijie Lu$^{a}$\,\,\,\,\,
Manuel Pinto$^{b}$\,\, \,\,
Y-H Xia$^{a}\footnote{Corresponding author. Y-H. Xia, xiadoc@outlook.com;yhxia@zjnu.cn. Address: College of Mathematics and Computer Science,  Zhejiang Normal University, 321004, Jinhua, China}$
\\
{\small \textit{$^a$ College of Mathematics and Computer Science,  Zhejiang Normal University, 321004, Jinhua, China}}\\
{\small \textit{$^b$ Departamento de Matem\'aticas, Universidad de Chile, Santiago, Chile }}\\
{\small Email: luwj@zjnu.edu.cn;  pintoj.uchile@gmail.com;  yhxia@zjnu.cn.}
}

\date{\today}
\maketitle

\begin{abstract}
   {
   Hartman-Grobman theorem states that there is a  homeomorphism $H$ sending the solutions of the nonlinear system onto those of its linearization under suitable assumptions.
    Many mathematicians have made contributions to prove H\"older continuity of the homeomorphisms. However, is it possible to improve the H\"older continuity to Lipschitzian continuity?
    This paper gives a positive answer. We formulate the first result that {\bf the homeomorphism is Lipschitzian, but not $C^1$, while its inverse is merely H\"{o}lder continuous, but not Lipschitzian.} It is interesting that the regularity of the homeomorphism is different from its inverse. Moreover, some illustrative examples are presented to show the effectiveness of our results. Further, motivated by our example, we also propose a conjecture, saying, the regularity of the homeomorphisms is sharp and it could not be improved any more.
     }
 \\
{\bf Keywords:} {Hartman-Grobman theorem;   stable manifolds; linearization; Exponential dichotomies}\\
 {\bf  MSC2020:} 34C41; 34D09; 34D10
\end{abstract}

\section{Introduction and motivation}

\subsection{Brief history of Hartman-Grobman theorem}


 A pioneering work on the linearization traces back to Poincar\'e \cite{Poincare}.
 He proved the analytical conjugation  between an analytic diffeomorphism and its linear part near a hyperbolic fixed
point in the complex case. Siegel \cite{Siegel}, 
 and Yoccoz \cite{Yoccoz} studied the case that eigenvalues of the linear part lie on the unit circle.
A basic contribution to the linearization probelm in the real case for autonomous differential equations is the Hartman-Grobman theorem (see \cite{Hartman} and \cite{Grobman}).
Palis \cite{Palis}, Pugh \cite{Pugh1}, Bates and Lu \cite{Lu2}, Lu \cite{Lu1}, Hein and Pr\"{u}ss \cite{Hein-Pruss1}, and { Zgliczy\'nski \cite{Zgl-HG}}
made contributions to the linearization problem on the infinite dimensional space. In particular,
 Bates and Lu \cite{Lu2} obtained a Hartman-Grobman theorem for Cahn-Hilliard equation and phase field equations.  Lu \cite{Lu1}  proved a Hartman-Grobman theorem for the scalar reaction-diffusion equations.
 Hein and Pr\"{u}ss \cite{Hein-Pruss1} gave a version of Hartman-Grobman theorem for semilinear hyperbolic evolution equation on Banach space.
Palmer  \cite{Palmer1} firstly  extended the Hartman-Grobman theorem  to the nonautonomous case.
In order to weaken  Palmer's linearization theorem, various versions of Hartman-Grobman theorem were established, Backes et al. \cite{BDK-JDE} (for nonhyperbolic systems), Barreira and Valls \cite{B-V1,B-V2,B-V3,B-V4} (with nonuniform exponential dichotomies),
Huerta et al. \cite{Huerta1,Huerta2} (nonuniform exponential contraction),  Jiang \cite{Jiang1} (generalized exponential dichotomy),
Jiang  \cite{Jiang2} (ordinary dichotomy), Fenner and Pinto \cite{Fenner-Pinto} and Xia et al. \cite{Xia1} (for impulsive systems), Papaschinopoulos \cite{Papa-A} (for differential equations with piecewise constant argument), P\"otzche \cite{Potzche1} (for dynamic equations on time scales),  Reinfelds and Sermone \cite{Reinfelds1}, Reinfelds and  \v{S}teinberga \cite{Reinfelds-IJPAM} (dynamical equivalence), Shi and Zhang \cite{Shi-Zhang1} (monograph for linearization), Xia et al. \cite{Xia-BSM} (with unbounded nonlinear term).
Except for the $C^0$ linearization mentioned above, much effort was made to investigate $C^{r}$
linearization for $C^{k}$ ($1\leq r\leq k\leq \infty$) diffeomorphisms. Sternberg \cite{Sternberg1,Sternberg2} initially studied the smooth linearization problem.
    Recently, the smooth linearization for $C^{k}$  ($1\leq k\leq \infty$) diffeomorphisms are well improved by
     Sell \cite{Sell1},
Belitskill et al. \cite{Belitskii1,Belitskii3}, Cuong et al. \cite{Cuong}, Dragi\v{c}evi\'{c} et al. \cite{ZWN-MZ,DZZ-PLMS},  Elbialy \cite{ElBialy1}, Rodrigues and Sol\`{a}-Morales \cite{R-S-1,R-S-2,R-S-3},
Zhang et al. \cite{ZWN-JDE,ZWN-ETDS,ZWN-MA,ZWN-TAMS}. In particular, a set of nice results on the sharp regularity of linearization for  hyperbolic diffeomorphisms were established in Zhang et al. \cite{ZWN-JDE,ZWN-ETDS,ZWN-MA}.

\subsection{Motivations and novelty}

An important and interesting problem is the regularity of the linearization, which have greatly attracted many mathematicians'  attentions.
Among the works on the linearization mentioned above, a lot of papers were devoted to proving the H\"older continuity of the homeomorphisms in the linearization theorem (see Backes et al. \cite{BDK-JDE},
Barreira and Valls \cite{B-V1,B-V2,B-V3,B-V4}, Dragi\v{c}evi\'{c} et al. \cite{ZWN-MZ,DZZ-PLMS},
Huerta et al. \cite{Huerta1,Huerta2}, Hein and Pr\"{u}ss \cite{Hein-Pruss1}, Jiang \cite{Jiang1,Jiang2}, P\"otzche \cite{Potzche1}, Shi and Zhang \cite{Shi-Zhang1}, Rodrigues and Sol\`{a}-Morales \cite{R-S-1,R-S-2}, Xia et al. \cite{Xia1,Xia-BSM},
Zhang et al. \cite{ZWN-JDE,ZWN-ETDS,ZWN-MA}, Tan \cite{Tan-JDE}, Shi and Xiong \cite{Shi}). For the sake of easier illustration, we restate the Palmer's linearization theorem \cite{Palmer1} which has extended the classical Hartman-Grobman theorem (\cite{Hartman,Grobman}) to the nonautonomous case. It states that there is a  homeomorphism $H$ sending the solutions of the nonlinear perturbed system
\begin{equation}
y'=A(t)y+f(t,y). \label{nueva_1}
\end{equation}
 onto those of its linearization
 \begin{equation}
 x'(t)=A(t)x(t) \label{eq1}
\end{equation}
 under suitable assumptions.
Many mathematicians have made contributions to prove that both of the homeomorphisms are H\"older continuous.
However, is it possible to improve the H\"older continuity to Lipschitzian continuity? Up till now, there is no existing results on the  Lipschitzian continuity of the homeomorphisms in the Hartman-Grobman theorem. This paper gave a positive answer.  In this paper, we formulate the first result  that {\bf the homeomorphism $H$ is Lipschitzian, but not $C^1$, while its inverse $G=H^{-1}$  is merely H\"{o}lder continuous, but not Lipschitzian.} Moreover, some illustrative examples are presented to show the effectiveness of our results. Further, motivated by our example, we also propose a conjecture, saying, the regularity the homeomorphisms is sharp and it can not be improved any more.

Maybe, one would doubt that the regularity of the homeomorphism $H$ is different from its inverse. A simple example gives the answer. If $H(x)=x^2,(x>0)$ (locally Lipschitzian), then the inverse is $G(y)=y^{1/2}$ (H\"older continuous).

It is not standard to prove the Lipschitzian continuity of homeomorphism $H$. To overcome the difficulty, we have to use the dichotomy inequality as well as  the theory of stable manifolds and  unstable manifolds.

In the global version of Hartman-Grobamn (type) theorem (for example, Palmer's linearization theorem), it usually requires that the nonlinear term $f$ is uniformly bounded and Lipschitzian.
  In this paper, we also weaken the linearization theorem in two ways:
(i) we consider nonlinear terms $f$ which may be unbounded or not Lipschitzian (see Example \ref{example_2.1});
(ii) we prove that it is enough to assume the boundedness of the Green operator of the coefficients.

\subsection{Mechanism of improving the regularity}

    Standardly, to prove the regularity of the homeomorphisms, one takes direct estimates of the constructing homeomorphisms (e.g. \cite{B-V1,B-V2,B-V3,B-V4}) or employs the Bellman inequality (see e.g.
    \cite{Hein-Pruss1,BDK-JDE,Huerta2,Xia1,Xia-BSM,Potzche1,Shi}).
     However, the disadvantage of the Bellman inequality   results in an exponential estimate of the form $e^{\alpha t} (\alpha>0)$.
    It is expansive, which leads us to obtain H\"{o}lder regularity.
    Therefore, most of the previous works on the regularity of homeomorphisms of Hartman-Grobaman theorem in $C^0$ linearization  is H\"{o}lder continuous.

    On the contrary, the advantage of the dichotomy inequality \cite{Pinto1,Pinto2,Pinto3,Pinto4}  results in an exponential decay of the form
$e^{-\alpha_{1} t} (\alpha_{1}>0)$, see Lemma \ref{lemma_3.9} in the present paper.
    Thus, by dichotomy inequality, we can prove the Lipschitz continuity of the conjugacy.

\subsection{Organization of the paper}

    The rest of this paper is organized as follows:
    In Section 2, we present our main results, i.e.   regularity of the linearization   and illustrative examples. Also a conjecture on the sharpness is given.
    In Section 3, rigorous proofs are given to show our main results.

\section{Main results, illustrative examples and open conjecture}

\subsection{Notations and concepts}
Consider the following two nonautonomous systems
\begin{equation}
 x'=f(t,x) \label{(2.1)}
\end{equation}
and
\begin{equation}
y'=g(t,y)\label{(2.2)}
\end{equation}
where $x,y\in\mathbb{R}^{n}, t\in\mathbb{R}$.

\begin{definition}\label{definition_2.1}
Suppose that there exists a function $H:\mathbb{R}\times \mathbb{R}^{n}\to \mathbb{R}^{n}$ such that\\
(i) for each fixed $t$, $H(t,\cdot)$ is a homeomorphism of $\mathbb{R}^{n}$ into $\mathbb{R}^{n}$;\\
(ii) $\|H(t,x)-x\|$ is uniformly bounded with respect to $t$;\\
(iii) $G(t,\cdot)=H^{-1}(t,\cdot)$ also has property (ii);\\
(iv) if $x(t)$ is a solution of the system \eqref{(2.1)}, then $H(t,x(t))$ is a solution of the system \eqref{(2.2)}; and if $y(t)$ is a solution of the system \eqref{(2.2)}, then $G(t,y(t))$ is a solution of the system \eqref{(2.1)}.\\
If such a map $H_t:= H(t,x(t))$ exists, then the system \eqref{(2.1)} is topologically conjugated to the system \eqref{(2.2)} and the transformation $H(t,x)$ is called an equivalent function.
\end{definition}

\begin{definition}\label{definition_2.2}(Coppel \cite{[4]})
The linear system $x'=A(t)x$ is said to possess an exponential dichotomy, if there exist a projection $P(s)$ and constants $K>0,\alpha>0$  such that
\begin{eqnarray}
 \|U(t,s)P(s)\|&\leq& K \exp\{-\alpha(t-s)\},\qquad t\geq s,\label{(2.3)}\\
 \|U(t,s)(I-P(s))\|&\leq& K \exp\{\alpha(t-s)\},\qquad t\leq s, \nonumber
 \end{eqnarray}
hold; here $U(t,s):=U(t)U^{-1}(s)$ and $U(t)$ is a fundamental matrix of linear system $x'=A(t)x$.
\end{definition}
Let the Green function
$$k(t,s) = \left\{
     \begin{array}{lr}
       U(t,s)P(s),\qquad \quad  \text{if } &  t\geq s\\
       -U(t,s)(I-P(s)),\,\,\, \text{if } & t\leq s,
      \end{array}
   \right.$$
and the Green operator
\begin{equation*}
 \mathcal{K}(\phi)(t)=\int_{-\infty}^{\infty}k(t,s)\phi(s)ds,\quad t\in \mathbb{R}, \label{green}
\end{equation*}
where $\phi:\mathbb{R}\to \mathbb{R}^{n}$ is a function, $\|\mathcal{K}(\phi)\|\leq \mathcal{L}_{\alpha}(\|\phi\|)$ with
\begin{equation}\label{b-eq}
  \mathcal{L}_{\alpha}(b)(t)=\int_{-\infty}^{\infty}\exp\{-\alpha |t-s|\}b(s)ds,
\end{equation}
for $b:\mathbb{R}\to (0,\infty)$ a continuous function.

\subsection{Dichotomy inequality }

The following lemma will be useful in the rest of the present work. It consist of a dichotomic inequality developed by Pinto
\cite{Pinto1,Pinto2,Pinto3,Pinto4}. They are of the following type
\begin{equation}
 u(t)\leq c\exp\{-\alpha (t-t_0)\}+c_1 \int_{t_0}^t \exp\{-\alpha (t-\tau)\}b(\tau)u(\tau)d\tau +c_2 \int_{t}^s \exp\{-\alpha (\tau-t)\}b(\tau)u(\tau)d\tau, \label{dic1}
\end{equation}
\begin{equation}
 u(t)\leq c\exp\{-\alpha (s-t)\}+c_1 \int_{t_0}^t \exp\{-\alpha (t-\tau)\}b(\tau)u(\tau)d\tau +c_2 \int_{t}^s \exp\{-\alpha (\tau-t)\}b(\tau)u(\tau)d\tau, \label{dic2}
\end{equation}
where $\alpha,\,c,\,c_1$ and $c_2$ are positive constants.\\
For $\alpha_1>0$, define for $t\in(t_0,s)$
\begin{equation}
 \mathcal{L}_{\alpha_1}(b)(t)=c_1 \int_{t_0}^t \exp\{-\alpha_1 (t-\tau)\}b(\tau)d\tau +c_2 \int_{t}^s \exp\{-\alpha_1 (\tau-t)\}b(\tau)d\tau. \label{operador_l}
\end{equation}
Assume that
\begin{equation}
\sup_{t\in(t_0,s)} \mathcal{L}_{\alpha_1}(b)(t)=\theta_1<1. \label{condicion_theta}
\end{equation}
\begin{lemma}\label{lemma_3.1.1}[First dichotomic inequality]
Let $t_0\in \mathbb{R},s\in [t_0,\infty)$ and $u:[t_0,s)\to[0,\infty)$ be continuous, bounded for $s=\infty$ functions such that for $t\in[t_0,s)$ inequality \eqref{dic1} holds.
Then, $\forall \alpha_2<\alpha=\alpha_1+\alpha_2$ and $\forall t\in[t_0,s)$ we have
\begin{equation*}
 u(t)\leq \dfrac{c}{1-\theta_1}\exp\{-\alpha_2(t-t_0)\}.
\end{equation*}
\end{lemma}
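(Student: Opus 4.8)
The plan is to absorb the dichotomy rate $\alpha$ into an exponential weight so that the kernels of \eqref{dic1} become decaying ones of rate $\alpha_1$, and then to close the estimate by a supremum argument against \eqref{condicion_theta}. Concretely, I would set $v(t):=e^{\alpha_2(t-t_0)}u(t)$, multiply \eqref{dic1} by $e^{\alpha_2(t-t_0)}$, and replace $u(\tau)$ by $e^{-\alpha_2(\tau-t_0)}v(\tau)$ inside the two integrals. Since $\alpha-\alpha_2=\alpha_1$, the retarded kernel becomes $e^{-\alpha(t-\tau)}e^{\alpha_2(t-\tau)}=e^{-\alpha_1(t-\tau)}$, while the advanced kernel becomes $e^{-\alpha(\tau-t)}e^{-\alpha_2(\tau-t)}=e^{-(\alpha+\alpha_2)(\tau-t)}\le e^{-\alpha_1(\tau-t)}$ for $\tau\ge t$; also the free term is $c\,e^{-\alpha_1(t-t_0)}\le c$. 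Hence
\begin{equation*}
 v(t)\le c+c_1\int_{t_0}^{t}e^{-\alpha_1(t-\tau)}b(\tau)v(\tau)\,d\tau+c_2\int_{t}^{s}e^{-\alpha_1(\tau-t)}b(\tau)v(\tau)\,d\tau .
\end{equation*}
Writing $M:=\sup_{[t_0,s)}v$ and recalling the definition \eqref{operador_l} of $\mathcal{L}_{\alpha_1}$, the right-hand side is at most $c+M\,\mathcal{L}_{\alpha_1}(b)(t)\le c+M\theta_1$ by \eqref{condicion_theta}; taking the supremum over $t$ gives $M\le c+M\theta_1$, and since $\theta_1<1$ this forces $M\le c/(1-\theta_1)$, i.e. $u(t)\le\frac{c}{1-\theta_1}e^{-\alpha_2(t-t_0)}$.

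The hard part — and the only genuinely delicate step — will be justifying a priori that $M<\infty$: when $s=\infty$ the hypothesis gives only that $u$ is bounded, not that $v=e^{\alpha_2(\cdot-t_0)}u$ is. I would remove this difficulty by running the comparison through the right-hand side operator of \eqref{dic1},
\begin{equation*}
 (\mathcal{T}w)(t):=c\,e^{-\alpha(t-t_0)}+c_1\int_{t_0}^{t}e^{-\alpha(t-\tau)}b(\tau)w(\tau)\,d\tau+c_2\int_{t}^{s}e^{-\alpha(\tau-t)}b(\tau)w(\tau)\,d\tau ,
\end{equation*}
acting on the complete metric space of nonnegative bounded continuous functions on $[t_0,s)$ with the sup norm. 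Because $\alpha\ge\alpha_1$, each kernel of $\mathcal{T}$ is dominated by that of $\mathcal{L}_{\alpha_1}$, so $\|\mathcal{T}w_1-\mathcal{T}w_2\|_\infty\le\theta_1\|w_1-w_2\|_\infty$; thus $\mathcal{T}$ is a $\theta_1$-contraction with a unique fixed point $u^{*}$, reached from any starting function by iteration. Since $\mathcal{T}$ is order-preserving and \eqref{dic1} reads $u\le\mathcal{T}u$, we get $u\le\mathcal{T}^{n}u$ for all $n$, hence $u\le u^{*}$. On the other hand the same exponential computation as in the first paragraph shows that $\psi(t):=\frac{c}{1-\theta_1}e^{-\alpha_2(t-t_0)}$ is a supersolution, $\mathcal{T}\psi\le\psi$ — this is exactly where $\alpha=\alpha_1+\alpha_2$ and $\sup_t\mathcal{L}_{\alpha_1}(b)(t)=\theta_1<1$ enter, through the identity $c+\frac{\theta_1 c}{1-\theta_1}=\frac{c}{1-\theta_1}$ — so the decreasing iterates $\mathcal{T}^{n}\psi$ stay below $\psi$ while converging to $u^{*}$, giving $u^{*}\le\psi$. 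Combining, $u(t)\le u^{*}(t)\le\psi(t)=\frac{c}{1-\theta_1}e^{-\alpha_2(t-t_0)}$.

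In the case $s<\infty$ the interval $[t_0,s)$ is bounded but $u$ need not be; there I would first observe that \eqref{condicion_theta} applied to $t$ near $s$ forces $b$ to be integrable on a left neighbourhood of $s$, so a short-interval contraction bounds $u$ near $s$ and hence on all of $[t_0,s)$, after which the argument above applies verbatim. The exponential bookkeeping and the contraction estimates are routine; the real content is the choice of the weight $e^{\alpha_2(\cdot-t_0)}$, which converts the dichotomy rate $\alpha$ into the decay rate $\alpha_2$ and leaves the residual rate $\alpha_1$ precisely to power the smallness condition \eqref{condicion_theta}.
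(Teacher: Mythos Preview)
Your core idea---multiply by the weight $e^{\alpha_2(t-t_0)}$, reduce the kernels to rate $\alpha_1$, and close with a supremum against \eqref{condicion_theta}---is exactly the paper's approach. The paper in fact only writes out the proof of the companion Lemma~\ref{lemma_3.1.2} (with $\hat u(t)=u(t)e^{\alpha_2(s-t)}$) and then says the first lemma ``follows in a similar way''; your first paragraph is precisely that similar argument.

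Where you go beyond the paper is in worrying about whether $M=\sup v$ is finite a priori. The paper simply writes $\sup_{\tau}\hat u(\tau)\le c/(1-\theta_1)$ without pausing over the finiteness of the left side, so the gap you flagged is real (or at least tacitly assumed away). Your fix---viewing the right-hand side of \eqref{dic1} as a monotone $\theta_1$-contraction $\mathcal{T}$ on bounded continuous functions, sandwiching $u\le u^{*}\le\psi$ via $u\le\mathcal{T}u$ and $\mathcal{T}\psi\le\psi$---is correct and strictly more rigorous than the paper's treatment. The supersolution computation for $\psi$ is right, and monotonicity plus the Banach fixed-point theorem give the sandwich cleanly. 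The paper's argument is shorter precisely because it skips this step; yours buys a genuinely complete proof in the case $s=\infty$, which is the only case actually used later (in Lemma~\ref{lemma_3.9}). Your handling of $s<\infty$ is sketchier, but the paper says nothing about that case either.
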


\begin{lemma}\label{lemma_3.1.2}[Second dichotomic inequality]
Let $s\in \mathbb{R},t_0\in (-\infty,s]$ and $u:[t_0,s)\to[0,\infty)$ be continuous, bounded for $t_0=-\infty$ functions such that for $t\in(t_0,s]$ inequality \eqref{dic2} holds.
Then, $\forall \alpha_2<\alpha=\alpha_1+\alpha_2$ and $\forall t\in(t_0,s]$ we obtain
\begin{equation*}
 u(t)\leq \dfrac{c}{1-\theta_1}\exp\{-\alpha_2(s-t)\}.
\end{equation*}
\end{lemma}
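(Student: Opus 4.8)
My first observation is that Lemma~\ref{lemma_3.1.2} is Lemma~\ref{lemma_3.1.1} read backwards in time, which gives the quickest route. Given $u\ge 0$ satisfying \eqref{dic2} on $(t_0,s]$, set $\widetilde u(r):=u(-r)$ and $\widetilde b(r):=b(-r)$ on $[\widetilde t_0,\widetilde s):=[-s,-t_0)$. The change of variables $\tau=-\sigma$ turns \eqref{dic2} into an inequality of the type \eqref{dic1} for $\widetilde u$, with $c_1$ and $c_2$ interchanged (immaterial, since Lemma~\ref{lemma_3.1.1} holds for arbitrary positive $c_1,c_2$) and with the forcing term $c\,e^{-\alpha(s-t)}$ becoming $c\,e^{-\alpha(r-\widetilde t_0)}$; the same substitution gives $\mathcal{L}_{\alpha_1}(\widetilde b)(r)=\mathcal{L}_{\alpha_1}(b)(-r)$, so the constant $\theta_1$ of \eqref{condicion_theta} is unchanged, while ``$u$ bounded for $t_0=-\infty$'' becomes ``$\widetilde u$ bounded for $\widetilde s=\infty$''. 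Lemma~\ref{lemma_3.1.1} then yields $\widetilde u(r)\le\frac{c}{1-\theta_1}e^{-\alpha_2(r-\widetilde t_0)}$, i.e. $u(t)\le\frac{c}{1-\theta_1}e^{-\alpha_2(s-t)}$ for $t\in(t_0,s]$.

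Since the real work sits in the proof of Lemma~\ref{lemma_3.1.1}, I also sketch a self-contained argument (the mirror image of what I expect that proof to be). Write $\alpha_1=\alpha-\alpha_2>0$, introduce the weight $w(t):=e^{-\alpha_2(s-t)}$, put $g(t):=c\,e^{-\alpha(s-t)}=c\,e^{-\alpha_1(s-t)}w(t)$, and define the positive linear operator
\[
 (\mathcal{T}\phi)(t):=c_1\int_{t_0}^{t}e^{-\alpha(t-\tau)}b(\tau)\phi(\tau)\,d\tau+c_2\int_{t}^{s}e^{-\alpha(\tau-t)}b(\tau)\phi(\tau)\,d\tau,
\]
so that \eqref{dic2} reads $u\le g+\mathcal{T}u$. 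I would first dispose of the a priori boundedness of $u$: for $t_0=-\infty$ it is a hypothesis, while for finite $t_0$, evaluating \eqref{dic2} at $t=s$ forces $\int_{t_0}^{s}b(\tau)u(\tau)\,d\tau<\infty$ (the kernel $e^{-\alpha(s-\tau)}$ is bounded below on the finite interval $(t_0,s)$), whence $\mathcal{T}u$ is bounded and therefore so is $u\le g+\mathcal{T}u$. Moreover, bounding $\phi$ by $\|\phi\|_\infty$ and using $e^{-\alpha|x|}\le e^{-\alpha_1|x|}$ together with \eqref{operador_l} and \eqref{condicion_theta} gives $(\mathcal{T}\phi)(t)\le\|\phi\|_\infty\,\mathcal{L}_{\alpha_1}(b)(t)\le\theta_1\|\phi\|_\infty$, so $\mathcal{T}$ contracts bounded functions by the factor $\theta_1<1$.

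The crux is a second, weighted estimate: since $w(\tau)/w(t)=e^{\alpha_2(\tau-t)}$, the forward kernel satisfies the exact identity $e^{-\alpha(\tau-t)}w(\tau)/w(t)=e^{-\alpha_1(\tau-t)}$ (this is precisely why one splits $\alpha=\alpha_1+\alpha_2$), the backward kernel satisfies $e^{-\alpha(t-\tau)}w(\tau)/w(t)=e^{-(\alpha+\alpha_2)(t-\tau)}\le e^{-\alpha_1(t-\tau)}$ for $\tau\le t$, and $g(t)/w(t)=c\,e^{-\alpha_1(s-t)}\le c$; hence for $\phi\ge0$ with $\sup(\phi/w)<\infty$ one gets $(\mathcal{T}\phi)(t)\le\|\phi/w\|_\infty\,w(t)\,\mathcal{L}_{\alpha_1}(b)(t)\le\theta_1\|\phi/w\|_\infty\,w(t)$, and in particular $\mathcal{T}^{k}g\le c\,\theta_1^{\,k}w$ for all $k\ge 0$. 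Iterating $u\le g+\mathcal{T}u$ by monotonicity of $\mathcal{T}$ gives $u\le\sum_{k=0}^{N-1}\mathcal{T}^{k}g+\mathcal{T}^{N}u$; since $\|\mathcal{T}^{N}u\|_\infty\le\theta_1^{\,N}\|u\|_\infty\to0$ and $\sum_{k\ge0}\mathcal{T}^{k}g$ converges (being dominated by $\sum_k c\,\theta_1^{\,k}w$), letting $N\to\infty$ pointwise yields
\[
 u(t)\le\sum_{k\ge0}(\mathcal{T}^{k}g)(t)\le\frac{c}{1-\theta_1}\,e^{-\alpha_2(s-t)},
\]
which is the assertion. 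Equivalently one may substitute $v:=u/w$, obtain $v(t)\le c+c_1\int_{t_0}^{t}e^{-\alpha_1(t-\tau)}b(\tau)v(\tau)\,d\tau+c_2\int_{t}^{s}e^{-\alpha_1(\tau-t)}b(\tau)v(\tau)\,d\tau$ from the kernel identities above, and close the loop: once $V:=\sup v<\infty$ is in hand, $v(t)\le c+V\,\mathcal{L}_{\alpha_1}(b)(t)\le c+V\theta_1$, hence $V\le c/(1-\theta_1)$.

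\textbf{Main obstacle.} The real difficulty is the anticipative term $\int_t^s(\cdots)$ in \eqref{dic2}, inherited from the unstable direction of the dichotomy: it blocks a plain Gronwall argument, so the smallness condition \eqref{condicion_theta} must be turned into a contraction / Neumann-series mechanism, and one has to verify that this mechanism respects the target decay rate $e^{-\alpha_2(s-t)}$ — which it does, exactly because the splitting $\alpha=\alpha_1+\alpha_2$ makes the conjugated forward kernel collapse to the $\alpha_1$-kernel. The only other point needing care is the a priori boundedness of $u$ when $t_0$ is finite, handled by evaluating \eqref{dic2} at $t=s$ as above.
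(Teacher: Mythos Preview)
Your proposal is correct, and in fact your final ``equivalently'' paragraph is exactly the paper's own proof: the paper sets $\hat u(t):=u(t)\exp\{\alpha_2(s-t)\}$ (your $v=u/w$), checks the kernel identities that replace $\alpha$ by $\alpha_1$, and then closes with the one-line supremum bound $\sup\hat u\le c+\theta_1\sup\hat u$. Your two headline routes are genuine alternatives not present in the paper. The time-reversal reduction to Lemma~\ref{lemma_3.1.1} is the most economical once that lemma is available (and your change of variables with the swap of $c_1,c_2$ is carried out correctly). The Neumann-series iteration $u\le\sum_{k<N}\mathcal{T}^k g+\mathcal{T}^N u$ is a bit longer but actually buys something: it avoids needing $\sup(u/w)<\infty$ a priori, a point the paper glosses over in the case $t_0=-\infty$, where boundedness of $u$ alone does not obviously give boundedness of $u(t)e^{\alpha_2(s-t)}$ as $t\to-\infty$. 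Your remainder term $\|\mathcal T^N u\|_\infty\le\theta_1^N\|u\|_\infty\to0$ uses only the unweighted boundedness of $u$, and the weighted estimate is applied only to the forcing $g$, so the argument is cleaner in this respect.
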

\begin{proof}
$\forall \alpha_2<\alpha=\alpha_1+\alpha_2$, we use $\exp\{-\alpha\}=\exp\{-\alpha_2\}\cdot\exp\{-\alpha_1\}$. Since $\alpha_2(s-t)-\alpha_2(t-\tau)-\alpha_2(s-\tau)=2\alpha_2(t-\tau)\leq 0,$ for $t\geq \tau$
and $\alpha_2(s-t)-\alpha_2(\tau-t)-\alpha_2(s-\tau)=0,$ inequality \eqref{dic2} implies that $\hat{u}(t)=:u(t)\exp\{\alpha_2(s-t)\}$ satisfies \eqref{dic2} with $\alpha_1$ instead of $\alpha$:
\begin{equation}
 \hat{u}(t)\leq c\exp\{-\alpha_1 (s-t)\}+c_1 \int_{t_0}^t \exp\{-\alpha_1 (t-\tau)\}b(\tau)\hat{u}(\tau)d\tau +c_2 \int_{t}^s \exp\{-\alpha_1 (\tau-t)\}b(\tau)\hat{u}(\tau)d\tau. \nonumber
\end{equation}
Then
\begin{equation*}
 \hat{u}(t)\leq c\exp\{-\alpha_1(s-t)\}+ \mathcal{L}_{\alpha_1}(b)(t)\cdot \sup_{\tau\in[t_0,s]}\hat{u}(\tau)
\end{equation*}
and hence
\begin{equation*}
 \sup_{\tau\in[t_0,s]}\hat{u}(\tau)\leq \dfrac{c}{1-\theta_1}.
\end{equation*}
Therefore
\begin{equation*}
 u(t)\leq \dfrac{c}{1-\theta_1}\exp\{-\alpha_2(s-t)\},\quad \text{for}\;t\in [t_0,s].
\end{equation*}
Lemma 3 follows in a similar way.
\end{proof}

\subsection{Main results on the Hartman-Grobman theorem and its regularity}

We divide our statements of the main results into two parts. One is on the existence of homeomorphisms, the other is on the higher regularity of homeomorphism.

\subsubsection{Existence of homeomorphisms}
Consider the system \eqref{nueva_1} where $y\in\mathbb{R}^{n},$ $A(t)$ is a $n\times n$ continuous matrix defined on $\mathbb{R}$ and $f(t,y)$ a continuous function on $\mathbb{R}\times \mathbb{R}^{n}$ respectively.
The following global linearization theorem is for the existence of the homeomorphisms.

\begin{theorem}\label{theorem_2.1} ({\bf global linearization})
Suppose that \eqref{eq1} admits an exponential dichotomy of the form \eqref{(2.3)} on $\mathbb{R}$, and there exist nonnegative integrable functions $\mu(t),r(t)$ such that for all $t,x,\bar{x}$, $f(t,x)$ satisfies
\begin{eqnarray}\label{(2.5)}
 &&\|f(t,x)-f(t,\bar{x})\|\leq r(t)\|x-\bar{x}\|,\\
 &&\|f(t,x)\|\leq \mu(t),\nonumber
\end{eqnarray}
where $\mu(t), r(t)$ satisfy
\begin{equation}\label{c1_c2}
\sup_{t\in\mathbb{R}}\mathcal{L}_{\alpha}(\mu)(t)<\infty \quad \mathrm{and} \quad \sup_{t\in\mathbb{R}}\mathcal{L}_{\alpha}(r)(t)=\theta<K^{-1}.
\end{equation}
Then system \eqref{nueva_1} is topologically conjugated to its linear system
\begin{equation}
x'=A(t)x\label{(2.6)}
\end{equation}
and the equivalent function $H(t,x)$ and its inverse $G(t,x)$  satisfy
\begin{equation}
 \|H(t,x)-x\|\leq K\| \mathcal{L}_{\alpha}(\mu)\|_\infty, \label{estimacion_H}
\end{equation}
\begin{equation}
 \|G(t,x)-x\|\leq K\| \mathcal{L}_{\alpha}(\mu)\|_\infty, \label{estimacion_G}
\end{equation}
where
\begin{equation*}
 \|\mathcal{L}_{\alpha}(\mu)\|_{\infty}=\sup_{t\in\mathbb{R}}\mathcal{L}_{\alpha}(\mu)(t).
\end{equation*}
\end{theorem}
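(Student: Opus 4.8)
\emph{Sketch of proof.} The plan is to run the classical Palmer construction, but with the integral smallness conditions \eqref{c1_c2} replacing the usual sup‑norm ones. As preliminaries I would first note that, since $\|f(t,y)\|\le\mu(t)$ is controlled through $\mathcal{L}_\alpha$ and \eqref{eq1} admits an exponential dichotomy, every solution of \eqref{nueva_1} exists on all of $\mathbb{R}$; write $y(\cdot;\tau,\xi)$ for the solution of \eqref{nueva_1} with $y(\tau;\tau,\xi)=\xi$ and $x(t;\tau,\eta)=U(t,\tau)\eta$ for that of \eqref{eq1}. The single fact used repeatedly is that, under \eqref{(2.3)}, a forcing $\phi$ with $\sup_t\mathcal{L}_\alpha(\|\phi\|)(t)<\infty$ produces a \emph{unique} bounded solution $z=\mathcal{K}(\phi)$ of $z'=A(t)z+\phi(t)$, with $\|\mathcal{K}(\phi)(t)\|\le K\,\mathcal{L}_\alpha(\|\phi\|)(t)$; in particular $\mathcal{K}(f(\cdot,y(\cdot;\tau,\xi)))$ is well defined, its integrand being dominated by $Ke^{-\alpha|t-s|}\mu(s)$.

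\emph{Construction of $H$.} For $(\tau,\xi)$ I would set
\begin{equation*}
H(\tau,\xi)=\xi-\mathcal{K}(f(\cdot,y(\cdot;\tau,\xi)))(\tau).
\end{equation*}
Since $z(t):=-\mathcal{K}(f(\cdot,y(\cdot;\tau,\xi)))(t)$ is the bounded solution of $z'=A(t)z-f(t,y(t;\tau,\xi))$, the curve $y(t;\tau,\xi)+z(t)$ solves \eqref{eq1}, and by uniqueness of ODE solutions the very same $z$ is generated from any base point on this orbit, so $t\mapsto H(t,y(t;\tau,\xi))$ is precisely that solution of \eqref{eq1}: this gives the first half of property (iv) in Definition \ref{definition_2.1}. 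Estimate \eqref{estimacion_H} is immediate from $\|f(t,y(t;\tau,\xi))\|\le\mu(t)$ and \eqref{c1_c2}, while continuity of $H(t,\cdot)$ follows by dominated convergence, the integrand being dominated by $Ke^{-\alpha|\tau-s|}\mu(s)$ uniformly in $\xi$, together with continuous dependence of $y(s;\tau,\xi)$ on $\xi$.

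\emph{Construction of $G$.} For $(\tau,\eta)$ I would look for a bounded continuous $w$ solving $w(t)=\mathcal{K}(f(\cdot,x(\cdot;\tau,\eta)+w(\cdot)))(t)$. The operator $\mathcal{T}w:=\mathcal{K}(f(\cdot,x(\cdot;\tau,\eta)+w(\cdot)))$ maps the Banach space of bounded continuous functions into itself, with $\|\mathcal{T}w\|\le K\|\mathcal{L}_\alpha(\mu)\|_\infty$, and by \eqref{(2.5)} it is a contraction, $\|\mathcal{T}w_1-\mathcal{T}w_2\|\le K\|\mathcal{L}_\alpha(r)\|_\infty\|w_1-w_2\|=K\theta\|w_1-w_2\|$ with $K\theta<1$ by \eqref{c1_c2}; the fixed point $w(\cdot;\tau,\eta)$ depends continuously on $\eta$ by the uniform contraction principle. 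Put $G(\tau,\eta)=\eta+w(\tau;\tau,\eta)$. Exactly as for $H$, $x(t;\tau,\eta)+w(t;\tau,\eta)$ solves \eqref{nueva_1}, so $t\mapsto G(t,x(t;\tau,\eta))$ is that solution (the second half of (iv)), \eqref{estimacion_G} holds, and $G(t,\cdot)$ is continuous.

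\emph{Inverses and conclusion; the hard point.} The step I expect to be the main obstacle is showing $G(t,\cdot)=H(t,\cdot)^{-1}$; it rests on carefully identifying base points and invoking uniqueness of bounded solutions of the two integral equations above. Fixing $(\tau,\xi)$ and writing $y(t)=y(t;\tau,\xi)$ and $x(t)=H(t,y(t))$ (a solution of \eqref{eq1} with $x(\tau)=H(\tau,\xi)$), the defect $v(t):=y(t)-x(t)=\mathcal{K}(f(\cdot,y(\cdot)))(t)=\mathcal{K}(f(\cdot,x(\cdot)+v(\cdot)))(t)$ is a bounded solution of the $G$‑equation based at $(\tau,x(\tau))$; by uniqueness $v=w(\cdot;\tau,x(\tau))$, hence $G(\tau,H(\tau,\xi))=x(\tau)+v(\tau)=y(\tau)=\xi$. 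Symmetrically, for $(\tau,\eta)$ with $x(t)=x(t;\tau,\eta)$ and $y(t)=G(t,x(t))=x(t)+w(t;\tau,\eta)$, the relation $w(\cdot;\tau,\eta)=\mathcal{K}(f(\cdot,y(\cdot)))$ gives $H(\tau,G(\tau,\eta))=y(\tau)-\mathcal{K}(f(\cdot,y(\cdot)))(\tau)=x(\tau)=\eta$. Thus each $H(t,\cdot)$ is a continuous bijection with continuous inverse $G(t,\cdot)$, i.e. a homeomorphism; items (ii) and (iii) of Definition \ref{definition_2.1} are \eqref{estimacion_H}--\eqref{estimacion_G}, (iv) was verified during the constructions, and therefore \eqref{nueva_1} is topologically conjugated to \eqref{(2.6)}. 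Apart from this matching argument, the only remaining points needing care are that composition of $f$ with the flows preserves the boundedness and the $\mathcal{L}_\alpha$‑integrability required for $\mathcal{K}$ to act, and that solutions of \eqref{nueva_1} are global under merely the integral hypotheses \eqref{c1_c2}.
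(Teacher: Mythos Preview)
Your proposal is correct and follows essentially the same route as the paper's Lemmas \ref{lemma_3.2}--\ref{lemma_3.8}: construct $h=-\mathcal{K}(f(\cdot,y(\cdot;\tau,\xi)))$ explicitly and $g$ as a contraction fixed point, set $H=\mathrm{id}+h$, $G=\mathrm{id}+g$, and verify Definition \ref{definition_2.1}. The only cosmetic difference is in the inverse step: the paper shows $H(t,G(t,y(t)))-y(t)$ is a bounded solution of $Z'=A(t)Z$ (hence zero) and handles $G\circ H$ via the auxiliary uniqueness Lemma \ref{lemma_3.4}, whereas you match the defects $v$ and $w$ directly to the unique fixed points of the respective integral equations---these are the same uniqueness principle packaged differently.
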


{Now we introduce a local version of Hartman-Grobman theorem.
The following lemma is elementary.

\begin{lemma}\label{local}
For some $\epsilon>0$, if $F:\mathbb{R}\times \overline{B}_\epsilon(0) \rightarrow \mathbb{R}^{n}$ satisfies $F(t,0)=0$ and
 \[ \|F(t,x)-F(t,\bar{x})\|\leq r(t)\|x-\bar{x}\|,\]
where $\overline{B}_\epsilon(0)$ is a small closed ball around zero and $r(t)$ is nonnegative local integrable with $\sup_{t\in\mathbb{R}}\mathcal{L}_{\alpha}(r)(t)=\theta<K^{-1}$.
Then the radial extension $\tilde{F}(t,x)$ defined by
\[
 \tilde{F}(t,x)=\left\{
           \begin{array}{ll}
             F(t,x), & x\in\overline{B}_\epsilon(0), \\
             F(t,\epsilon\frac{x}{\|x\|}), & x\in \mathbb{R}^{n}\backslash \overline{B}_\epsilon(0),
           \end{array}
         \right.
\]
always satisfies
\begin{equation}\label{ff}
  \|\tilde{F}(t,x)-\tilde{F}(t,\bar{x})\|\leq 2r(t)\|x-\bar{x}\|.
\end{equation}
\end{lemma}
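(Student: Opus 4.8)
The plan is to observe that $\tilde F$ is nothing but the composition $\tilde F(t,\cdot)=F(t,\rho(\cdot))$, where $\rho:\mathbb{R}^{n}\to\overline{B}_\epsilon(0)$ is the radial retraction $\rho(x)=x$ for $\|x\|\le\epsilon$ and $\rho(x)=\epsilon x/\|x\|$ for $\|x\|>\epsilon$, and then to reduce \eqref{ff} to the single geometric fact that $\rho$ is globally $2$-Lipschitz. Indeed, once $\|\rho(x)-\rho(\bar x)\|\le 2\|x-\bar x\|$ is known, the hypothesis on $F$ yields at once
\[
\|\tilde F(t,x)-\tilde F(t,\bar x)\|=\|F(t,\rho(x))-F(t,\rho(\bar x))\|\le r(t)\,\|\rho(x)-\rho(\bar x)\|\le 2r(t)\,\|x-\bar x\|,
\]
since $\rho(x),\rho(\bar x)\in\overline{B}_\epsilon(0)$ always, so that $F$ is only evaluated where it is defined. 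Note that neither $F(t,0)=0$ nor the smallness condition $\sup_{t}\mathcal{L}_\alpha(r)(t)=\theta<K^{-1}$ enters this lemma; they are relevant only for the subsequent application of Theorem \ref{theorem_2.1} to the extended field $\tilde F$.

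It remains to prove that $\rho$ is $2$-Lipschitz, which I would settle by splitting into three cases according to whether each of $\|x\|,\|\bar x\|$ is $\le\epsilon$ or $>\epsilon$. The case $\|x\|,\|\bar x\|\le\epsilon$ is trivial, with constant $1$. For $\|x\|,\|\bar x\|>\epsilon$ one uses the elementary estimate $\bigl\|x/\|x\|-\bar x/\|\bar x\|\bigr\|\le 2\|x-\bar x\|/\max(\|x\|,\|\bar x\|)$, obtained in one line by adding and subtracting $\bar x/\|x\|$ and invoking $\bigl|\,\|x\|-\|\bar x\|\,\bigr|\le\|x-\bar x\|$, together with $\epsilon\le\max(\|x\|,\|\bar x\|)$. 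For the mixed case, say $\|x\|\le\epsilon<\|\bar x\|$, one writes $\bigl\|x-\epsilon\bar x/\|\bar x\|\bigr\|\le\|x-\bar x\|+(\|\bar x\|-\epsilon)$ and bounds $\|\bar x\|-\epsilon\le\|\bar x\|-\|x\|\le\|x-\bar x\|$. All three cases deliver the constant $2$, completing the proof.

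There is essentially no obstacle here, the lemma being elementary; the only point deserving care is that the norm on $\mathbb{R}^{n}$ is allowed to be arbitrary, so one may not appeal to the $1$-Lipschitz property of the metric projection onto a convex set (valid only for inner-product norms). This is precisely why the stated Lipschitz constant is $2$ rather than $1$, and the mixed case above shows $2$ is what this argument naturally produces. For completeness one should also note that $\tilde F$ is jointly continuous, being the composition of the continuous map $\rho$ with $F$ (which is continuous in $x$ by its Lipschitz bound), so that $\tilde F$ is a bona fide globally defined vector field coinciding with $F$ on $\overline{B}_\epsilon(0)$, as required to pass from a local to a global Hartman--Grobman statement via Theorem \ref{theorem_2.1}.
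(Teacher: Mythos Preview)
Your proof is correct. The paper itself does not supply a proof of this lemma, merely introducing it with the sentence ``The following lemma is elementary,'' so there is no argument to compare against; your write-up fills in exactly the details the paper omits. Your observation that the hypotheses $F(t,0)=0$ and $\sup_t\mathcal{L}_\alpha(r)(t)<K^{-1}$ play no role in the inequality \eqref{ff} itself, and your remark that the constant $2$ (rather than $1$) is the natural outcome when the norm on $\mathbb{R}^n$ is not assumed Euclidean, are both accurate and worth noting.
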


\begin{theorem}\label{theorem-local}({\bf local linearization})
Suppose that \eqref{eq1} has an exponential dichotomy of the form \eqref{(2.3)} on $\mathbb{R}$.
Furthermore, for some $\epsilon>0$, if $f:\mathbb{R}\times \overline{B}_\epsilon(0) \rightarrow \mathbb{R}^{n}$ satisfies $f(t,0)=0$,
\[ \|f(t,x)-f(t,\bar{x})\|\leq 2r(t)\|x-\bar{x}\|,\]
and such that $2\sup_{t\in\mathbb{R}}\mathcal{L}_{\alpha} (r)(t)=2\theta<K^{-1}$.
Then system \eqref{nueva_1} is topologically conjugated to system \eqref{(2.6)} on $\overline{B}_\epsilon(0)$.
\end{theorem}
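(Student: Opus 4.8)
The plan is to reduce the local statement to the global one (Theorem \ref{theorem_2.1}) by means of the radial extension of Lemma \ref{local}. First I would let $\tilde f(t,x)$ be the radial extension of $f$ given by Lemma \ref{local}. Then $\tilde f$ is defined and continuous on all of $\mathbb{R}\times\mathbb{R}^{n}$ (as a composition of $f$ with the continuous radial retraction onto $\overline{B}_\epsilon(0)$), it coincides with $f$ on $\mathbb{R}\times\overline{B}_\epsilon(0)$, it satisfies $\tilde f(t,0)=0$, and by Lemma \ref{local} it is globally Lipschitz in $x$ with a Lipschitz function that is a fixed multiple of $r(t)$; the factor $2$ built into the hypotheses of Theorem \ref{theorem-local} is exactly what keeps $\sup_{t\in\mathbb{R}}\mathcal{L}_{\alpha}$ of this Lipschitz function strictly below $K^{-1}$, as demanded by \eqref{c1_c2}. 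Moreover, since $\tilde f(t,0)=0$ and $\tilde f(t,x)=f(t,\epsilon x/\|x\|)$ for $\|x\|>\epsilon$, the Lipschitz bound on $\overline{B}_\epsilon(0)$ gives the uniform estimate $\|\tilde f(t,x)\|\le c\,\epsilon\,r(t)=:\mu(t)$ for all $(t,x)$, with $c$ a fixed constant; hence $\mu$ inherits the integrability of $r$ and $\sup_{t\in\mathbb{R}}\mathcal{L}_{\alpha}(\mu)(t)=c\,\epsilon\sup_{t\in\mathbb{R}}\mathcal{L}_{\alpha}(r)(t)<\infty$. Thus $\tilde f$ fulfils all the hypotheses \eqref{(2.5)}--\eqref{c1_c2} of Theorem \ref{theorem_2.1}.

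Next I would apply Theorem \ref{theorem_2.1} to the globally perturbed system $y'=A(t)y+\tilde f(t,y)$. This yields an equivalent function $H(t,x)$, with inverse $G(t,x)=H^{-1}(t,x)$, satisfying $\|H(t,x)-x\|\le K\|\mathcal{L}_{\alpha}(\mu)\|_{\infty}$ and $\|G(t,x)-x\|\le K\|\mathcal{L}_{\alpha}(\mu)\|_{\infty}$, such that $H$ sends solutions of $y'=A(t)y+\tilde f(t,y)$ onto solutions of $x'=A(t)x$ and $G$ sends them back; in particular all four clauses of Definition \ref{definition_2.1} hold for the pair (extended system, linear system).

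Finally I would localize. On $\mathbb{R}\times\overline{B}_\epsilon(0)$ the right-hand sides of \eqref{nueva_1} and of the extended system coincide, so any solution $y(\cdot)$ of \eqref{nueva_1} whose graph stays in $\mathbb{R}\times\overline{B}_\epsilon(0)$ is also a solution of the extended system; consequently $t\mapsto H(t,y(t))$ is a solution of \eqref{(2.6)}, and conversely $G$ carries the corresponding solutions of \eqref{(2.6)} back to solutions of \eqref{nueva_1} lying in the ball. Restricting $H(t,\cdot)$ and $G(t,\cdot)$ to suitable neighbourhoods of the origin then produces the asserted topological conjugacy of \eqref{nueva_1} and \eqref{(2.6)} on $\overline{B}_\epsilon(0)$ in the sense of Definition \ref{definition_2.1}.

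The step I expect to be the main obstacle is the localization itself: one must fix, and then verify, a precise meaning of ``topologically conjugated on $\overline{B}_\epsilon(0)$'' — since solutions of \eqref{nueva_1} may leave the ball, what the global argument truly produces is a conjugacy of the truncated equation, and one has to check that the restriction of the global $H$ to a neighbourhood of $0$ still satisfies items (i)--(iv) of Definition \ref{definition_2.1} relative to the local equations, and that the truncation does not alter solutions while they remain in $\overline{B}_\epsilon(0)$. A secondary, purely quantitative point is to track the Lipschitz constant contributed by the radial retraction in the chosen norm on $\mathbb{R}^{n}$ and confirm that, combined with the factor $2$ in the hypotheses, it preserves the smallness condition $\sup_{t\in\mathbb{R}}\mathcal{L}_{\alpha}(\cdot)(t)<K^{-1}$ required to invoke Theorem \ref{theorem_2.1}.
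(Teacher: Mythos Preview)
Your approach is essentially the same as the paper's: the paper does not give a separate formal proof of Theorem \ref{theorem-local}, but immediately after the statement supplies a Remark that is exactly your reduction --- take the radial extension $\tilde f$ of Lemma \ref{local}, observe that $\|\tilde f(t,x)\|\le 2r(t)\epsilon=:2\mu(t)$ so that $\sup_{t}\mathcal{L}_{\alpha}(\mu)(t)<\infty$, and note that together with \eqref{ff} and $2\theta<K^{-1}$ all hypotheses of Theorem \ref{theorem_2.1} are met. Your write-up is in fact more careful than the paper's, since you flag the localization step (making precise what ``topologically conjugated on $\overline{B}_\epsilon(0)$'' means) and the bookkeeping of Lipschitz constants through the radial retraction, neither of which the paper addresses explicitly.
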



\begin{remark}
Clearly, Theorem \ref{theorem-local} is a local version of  Hartman-Grobman Theorem. Note that when $x\in\overline{B}_\epsilon(0)$,
\[ \|f(t,x)\|\leq r(t)\|x\|\leq r(t) \epsilon:=\mu(t).\]
when $x\in \mathbb{R}^{n}\backslash \overline{B}_\epsilon(0)$,
\[ \|f(t,x)\|\leq 2r(t) \epsilon:=2\mu(t).\]
If $\mu(t)$ satisfies
$\sup_{t\in\mathbb{R}}\mathcal{L}_{\alpha}(\mu)(t)<\infty$, then it satisfies all conditions of Theorem \ref{theorem_2.1}.
\end{remark}
}

\begin{lemma}\label{lemma_3.1}(Coppel \cite{[4]})
If $\mu(t)$, $r(t)$   are nonnegative local integrable functions on $\mathbb{R}$, i.e.,
$C_{\mu}=\sup_{t\in\mathbb{R}}\int_t^{t+1}\mu(s)ds$,  $C_r=\sup_{t\in\mathbb{R}}\int_t^{t+1}r(s)ds$, then we have
\begin{equation}\label{(3.1)}
\mathcal{L}_{\alpha}(\mu)(t)\leq 2 (1-\exp\{-\alpha\})^{-1}C_{\mu},\quad \mathrm{and} \quad
\mathcal{L}_{\alpha}(r)(t)\leq 2 (1-\exp\{-\alpha\})^{-1}C_{r}.
\end{equation}
\end{lemma}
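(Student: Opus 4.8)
The plan is to prove the bound by decomposing $\mathbb{R}$ into unit-length slabs positioned relative to the shift parameter $t$, which reduces $\mathcal{L}_{\alpha}(b)(t)$ to a geometric series whose terms are controlled by the quantity $C_{b}:=\sup_{\tau\in\mathbb{R}}\int_{\tau}^{\tau+1}b(s)\,ds$. I would carry out the estimate for a generic nonnegative, locally integrable $b$ with $C_{b}<\infty$, and then specialize to $b=\mu$ and $b=r$.

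First I would write, recalling the definition \eqref{b-eq},
\[
 \mathcal{L}_{\alpha}(b)(t)=\int_{-\infty}^{\infty}\exp\{-\alpha|t-s|\}b(s)\,ds=\sum_{k\in\mathbb{Z}}\int_{t+k}^{t+k+1}\exp\{-\alpha|t-s|\}b(s)\,ds.
\]
Next, on the slab $s\in[t+k,t+k+1]$ I would estimate the kernel separately for the two signs of $k$: if $k\geq 0$ then $|t-s|=s-t\geq k$, hence $\exp\{-\alpha|t-s|\}\leq\exp\{-\alpha k\}$; if $k\leq -1$ then $s\leq t+k+1\leq t$, so $|t-s|=t-s\geq -k-1\geq 0$ and $\exp\{-\alpha|t-s|\}\leq\exp\{-\alpha(-k-1)\}$. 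In either case $\int_{t+k}^{t+k+1}b(s)\,ds\leq C_{b}$, since this is the integral of $b$ over an interval of length one.

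Then I would assemble the pieces. The terms with $k\geq 0$ contribute at most $C_{b}\sum_{k\geq 0}\exp\{-\alpha k\}$, and after reindexing $j=-k-1$ the terms with $k\leq -1$ contribute at most $C_{b}\sum_{j\geq 0}\exp\{-\alpha j\}$, so that
\[
 \mathcal{L}_{\alpha}(b)(t)\leq 2C_{b}\sum_{m=0}^{\infty}\exp\{-\alpha m\}=\frac{2C_{b}}{1-\exp\{-\alpha\}},
\]
and this bound is uniform in $t$. Taking $b=\mu$ and $b=r$ gives the two inequalities in \eqref{(3.1)}.

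As for the main obstacle: there is essentially no difficulty of substance here — the lemma is a routine majorization of a convolution kernel against a ``Stepanov-type'' norm. The only point requiring a little care is the index bookkeeping in the negative-$k$ slabs (so that the geometric series runs from $m=0$ and the factor $2$ appears rather than something larger), together with the observation that the estimate holds uniformly in $t$, which is precisely what makes Lemma~\ref{lemma_3.1} usable when passing to the suprema over $t$ in conditions like \eqref{c1_c2} elsewhere in the paper.
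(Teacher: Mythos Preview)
Your proof is correct. The paper does not prove Lemma~\ref{lemma_3.1} itself---it is quoted from Coppel~\cite{[4]}---but your unit-slab decomposition and geometric-series summation is exactly the standard argument, and indeed the paper uses precisely this same slab technique later in Step~1--2 of the proof of Theorem~\ref{theorem_2.2} when estimating $\|J_{11}\|$ and $\|J_{22}\|$.
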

Then the following result is obvious.
\begin{corollary}
 Theorem \ref{theorem_2.1} or Theorem \ref{theorem-local} hold if $C_{\mu}=\sup_{t\in\mathbb{R}}\int_t^{t+1}\mu(s)ds<\infty$ and
 $$\theta:=2(1-\exp\{-\alpha\})^{-1}C_r<K^{-1}.$$
\end{corollary}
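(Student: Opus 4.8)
The plan is to obtain this corollary as an immediate specialization of Theorem~\ref{theorem_2.1} and Theorem~\ref{theorem-local}, using Lemma~\ref{lemma_3.1} (Coppel) purely as a bridge that converts the ``one‑unit average'' quantities $C_\mu,C_r$ into bounds on the operators $\mathcal{L}_\alpha$. No new analysis is required; one only has to check that the hypotheses of the corollary imply the hypotheses appearing in \eqref{c1_c2} (respectively in the smallness requirement of Theorem~\ref{theorem-local}).

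First I would record the consequence of $C_\mu<\infty$: the first inequality in \eqref{(3.1)} gives, uniformly in $t$,
\[
\mathcal{L}_\alpha(\mu)(t)\le 2(1-e^{-\alpha})^{-1}C_\mu,
\]
so that $\|\mathcal{L}_\alpha(\mu)\|_\infty\le 2(1-e^{-\alpha})^{-1}C_\mu<\infty$, which is exactly the first condition in \eqref{c1_c2}. Next, from the hypothesis $\theta:=2(1-e^{-\alpha})^{-1}C_r<K^{-1}$ together with the second inequality in \eqref{(3.1)} I would conclude
\[
\sup_{t\in\mathbb{R}}\mathcal{L}_\alpha(r)(t)\le 2(1-e^{-\alpha})^{-1}C_r=\theta<K^{-1},
\]
which is the second condition in \eqref{c1_c2}. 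Hence Theorem~\ref{theorem_2.1} applies verbatim; in particular the conjugacy estimates \eqref{estimacion_H}--\eqref{estimacion_G} remain valid, and one may replace $\|\mathcal{L}_\alpha(\mu)\|_\infty$ there by the explicit quantity $2(1-e^{-\alpha})^{-1}C_\mu$ if an effective constant is wanted. For the local statement the argument is identical: the same bound shows $\sup_t\mathcal{L}_\alpha(r)(t)\le\theta<K^{-1}$, so the smallness hypothesis of Theorem~\ref{theorem-local} holds and topological conjugacy on $\overline{B}_\epsilon(0)$ follows.

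I do not expect a genuine obstacle here; the only point requiring attention is bookkeeping of the constant factors of $2$. Theorem~\ref{theorem-local} is formulated with the Lipschitz coefficient $2r(t)$ (coming from the radial extension in Lemma~\ref{local}) and with the condition $2\theta<K^{-1}$, whereas Theorem~\ref{theorem_2.1} uses $r(t)$ and $\theta<K^{-1}$; one must therefore be explicit about whether the symbol $r$ in the corollary denotes the bare Lipschitz coefficient of $f$ or its doubled radial‑extension version, and correspondingly whether the threshold being invoked is $\theta<K^{-1}$ or $2\theta<K^{-1}$. Once that convention is fixed, both cases reduce to the two displayed inequalities above, and the proof is complete.
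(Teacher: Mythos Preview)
Your proposal is correct and matches the paper's approach exactly: the paper simply states that the corollary ``is obvious'' after recording Lemma~\ref{lemma_3.1}, and your argument spells out precisely this reduction via \eqref{(3.1)} to the hypotheses \eqref{c1_c2}. Your remark on the factor-of-$2$ bookkeeping for Theorem~\ref{theorem-local} is a valid caveat that the paper does not address.
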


\subsubsection{Higher regularity of homeomorphism}

Now it is the position to state our main result on the  regularity of homeomorphisms existing in Theorem \ref{theorem_2.1} and \ref{theorem-local}.
{
 \begin{theorem}\label{theorem_2.2} ({\bf regularity of homeomorphisms})
 Suppose that the conditions in Theorem \ref{theorem_2.1} or Theorem \ref{theorem-local} are satisfied.
If $\sup_{t\in \mathbb{R}} \mathcal{L}_{\alpha_1}(r)(t)=\tilde{\theta}<K^{-1}, $
   then the equivalent function $H$ is Lipschitzian, but its inverse $G$ is H\"{o}lder continuous.
   More specifically, there exist positive constants $p, q>0$ and $0<\beta<1$  such that
   \begin{equation}\label{(2.7)}
   \begin{cases}
   \|H(t,x)-H(t,\bar{x})\| \leq 
                               p\|x-\bar{x}\|, \\
   \|G(t,x)-G(t,\bar{x})\|\leq q\|x-\bar{x}\|^{\beta}.
   \end{cases}
   \end{equation}
\end{theorem}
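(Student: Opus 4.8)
The plan is to exploit the integral (fixed-point) representations of $H$ and $G$ coming from the construction in Theorem \ref{theorem_2.1}. Recall that, in the standard Palmer-type argument, $H(t,x)=x+h(t,x)$ where, writing $x(t)=x(t;t_0,x)$ for the solution of the nonlinear system \eqref{nueva_1} and $Y(t)$ for the corresponding solution of the linear system, the bounded ``correction'' $h$ is the unique bounded solution of a variation-of-constants equation involving the Green operator $\mathcal{K}$ and the nonlinearity $f$. Concretely, along solutions one has an identity of the schematic form
\begin{equation*}
 h(t,x(t)) = -\int_{-\infty}^{\infty} k(t,s)\, f\bigl(s, x(s)\bigr)\, ds,
\end{equation*}
and symmetrically for $G=H^{-1}$ one has $G(t,y)=y+g(t,y)$ with $g$ solving the analogous equation driven by $f$ evaluated along the \emph{nonlinear} flow reconstructed from $y$. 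The first step is therefore to write down these representations precisely and to recall the Lipschitz-in-$x$ dependence of the flows: $\|x(t;t_0,x)-x(t;t_0,\bar x)\|$ obeys an inequality of exactly the dichotomy type \eqref{dic1}/\eqref{dic2}, so by Lemma \ref{lemma_3.1.1} and Lemma \ref{lemma_3.1.2} (this is where the hypothesis $\sup_t \mathcal{L}_{\alpha_1}(r)(t)=\tilde\theta<K^{-1}$ enters, splitting $\alpha=\alpha_1+\alpha_2$) we get
\begin{equation*}
 \|x(t;t_0,x)-x(t;t_0,\bar x)\| \leq C\,e^{-\alpha_2|t-t_0|}\,\|x-\bar x\|,
\end{equation*}
i.e. a genuine \emph{exponential contraction} of the difference of solutions as $t$ moves away from $t_0$, rather than the expansive Gronwall bound $e^{\alpha|t-t_0|}$ that only yields H\"older estimates.

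For the Lipschitz bound on $H$, I would then differentiate (estimate) the representation of $h$ at $t=t_0$: $\|H(t_0,x)-H(t_0,\bar x)\| \le \|x-\bar x\| + \int_{-\infty}^{\infty}\|k(t_0,s)\|\,\|f(s,x(s))-f(s,\bar x(s))\|\,ds \le \|x-\bar x\| + K\int e^{-\alpha|t_0-s|} r(s)\,\|x(s)-\bar x(s)\|\,ds$. Inserting the exponential-contraction estimate above, the integrand is bounded by $K\,C\,e^{-\alpha|t_0-s|}e^{-\alpha_2|s-t_0|}r(s)\|x-\bar x\|$, and since $\alpha=\alpha_1+\alpha_2\ge\alpha_1$ the remaining integral is dominated by $C\,\mathcal{L}_{\alpha_1}(r)(t_0)\le C\tilde\theta$; summing the resulting geometric series (or closing the fixed-point estimate directly) gives $\|H(t_0,x)-H(t_0,\bar x)\|\le p\|x-\bar x\|$ with $p$ depending on $K,C,\tilde\theta$. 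This is the step that genuinely uses the \emph{dichotomy} inequality rather than Bellman's inequality, exactly as advertised in the ``Mechanism'' subsection.

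For $G$ only H\"older continuity can be expected, and here the roles reverse: to reconstruct the nonlinear flow from a point $y$ in the linear coordinates one must integrate \emph{backwards} against the exponentially growing branch of the dichotomy, so the relevant estimate for $\|g(t,y)-g(t,\bar y)\|$ picks up a factor that grows like $e^{\alpha|t-t_0|}$ on the ``wrong'' side, and the bounded-solution fixed point can only be closed after trading exponential growth for a loss of H\"older exponent --- concretely one interpolates between the a priori bound $\|G(t,y)-G(t,\bar y)\|\le 2K\|\mathcal{L}_\alpha(\mu)\|_\infty$ (from \eqref{estimacion_G}) and a Lipschitz-type bound valid only on a short time window, optimizing over the window length to produce an exponent $\beta=\alpha_2/\alpha\in(0,1)$ and a constant $q$. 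I would carry this out by the familiar device: for $\|y-\bar y\|$ small, run the contraction estimate on $[t_0-T,t_0+T]$ with $e^{\alpha T}\|y-\bar y\|\sim 1$, bound $\|G(t_0,y)-G(t_0,\bar y)\|$ on that window linearly and outside it by the uniform bound, then substitute $e^{-\alpha T}\sim\|y-\bar y\|$.

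The main obstacle I anticipate is \emph{bookkeeping the two-sided nature of the Green function} when passing from the solution-dependent identities $h(t,x(t))=\dots$ to pointwise-in-$(t_0,x)$ Lipschitz statements: one has to be careful that the contraction estimate for $\|x(t)-\bar x(t)\|$ is applied on the correct half-line relative to the sign of $t-t_0$ and to the stable/unstable splitting hidden in $k(t,s)$, which is precisely why the write-up invokes stable and unstable manifold theory. A secondary technical point is verifying that the fixed-point maps defining $h$ and $g$ remain contractions in the \emph{weighted} sup-norm $\sup_t e^{\alpha_2|t-\cdot|}|\cdot|$ needed to make the exponential-decay gain usable; this is a routine but essential check that the constant $\tilde\theta<K^{-1}$ (not merely $\theta<K^{-1}$) is what makes it go through.
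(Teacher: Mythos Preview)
Your mechanism is right but the key displayed estimate is false as written, and this is exactly the gap. The full nonlinear flow difference $\|x(t;t_0,x)-x(t;t_0,\bar x)\|$ does \emph{not} satisfy a dichotomy inequality of the form \eqref{dic1}/\eqref{dic2}: already for the linear part, $U(t,t_0)(x-\bar x)$ grows exponentially along the unstable direction, so the leading term in the variation-of-constants formula is not $c\,e^{-\alpha(t-t_0)}$ but something that may blow up, and Lemmas~\ref{lemma_3.1.1}--\ref{lemma_3.1.2} simply do not apply. The two-sided bound $Ce^{-\alpha_2|t-t_0|}\|x-\bar x\|$ you state cannot hold for arbitrary $x,\bar x$. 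You correctly flag this as the ``main obstacle'' at the end, but you do not resolve it; without a resolution, the plug-in step in your second paragraph does not close.

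The paper's resolution is Lemma~\ref{lemma_3.9}: one works not with the full flow but with the \emph{bounded-on-half-line} solutions (the stable/unstable manifold solutions), parametrized by $P(t_0)x$ on $[t_0,\infty)$ and by $(I-P(t_0))x$ on $(-\infty,t_0]$. For these, the integral representations \eqref{PPP}/\eqref{QQQ} have a decaying leading term, the dichotomy inequality does apply, and one obtains the one-sided estimates \eqref{21}--\eqref{21-2}. Then $h(t,(t,\xi))$ is split as $P(t)h+(I-P(t))h$, producing half-line integrals $I_1=\int_{-\infty}^t\cdots$ and $I_2=\int_t^\infty\cdots$; the paper pairs $I_1$ with \eqref{21-2} (controlled by $\|(I-P(t))(\xi-\bar\xi)\|$) and $I_2$ with \eqref{21} (controlled by $\|P(t)(\xi-\bar\xi)\|$), so that the factor $e^{\pm\alpha_2(s-t)}$ from Lemma~\ref{lemma_3.9} combines with $e^{-\alpha|t-s|}$ from the Green kernel to leave $e^{-\alpha_1|t-s|}$, and the remaining integral is $\mathcal{L}_{\alpha_1}(r)(t)\le\tilde\theta$. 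For $G$, your window/optimization sketch is in spirit what the paper does (window length $\tau=\tfrac{1}{M+C_\mu}\ln\tfrac{1}{\|\xi-\bar\xi\|}$ with $M=\sup_t\|A(t)\|$), but since $g$ is defined \emph{implicitly} through $f(\cdot,Y+g)$ the paper runs the estimate via successive approximations $g_m$ and an inductive H\"older bound; the resulting exponent $\beta$ depends on $M$, not on $\alpha_2/\alpha$ as you guessed.
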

}
\begin{remark}
In the previous literature \cite{Hein-Pruss1,BDK-JDE,Huerta2,Jiang1,Xia1,Potzche1,Xia-BSM,Shi}, it is proven that both of the homeomorphisms are H\"older continuous. It can be restated that there exist
   positive constants $p_1, q>0$ and $0<\gamma, \beta<1$  such that
   \[\label{Holder-continuous}
   \begin{cases}
   \|H(t,x)-H(t,\bar{x})\|\leq p_1\|x-\bar{x}\|^{\gamma}, \\
   \|G(t,x)-G(t,\bar{x})\|\leq q \|x-\bar{x}\|^{\beta}.
  \end{cases}
\]
\end{remark}

{   When $A(t)\equiv A$, $A$ is a constant matrix, the systems reduce to the autonomous systems. Then we have the following corollary.} 
{
\begin{corollary}\label{global-hg}
 Let $A$ be hyperbolic, i.e., the spectrum of $A$ has no purely imaginary eigenvalues.
 If the nonlinear term $f$ satisfies
\[ \|f(x)-f(\bar{x})\|\leq r\|x-\bar{x}\|, \quad \|f(x)\|\leq \mu,\]
for all $x,\bar{x}\in\mathbb{R}^n$, and such that $2rk<\alpha$ ($k,\alpha$ are given in \eqref{(2.3)}), then the nonlinear autonomous
system $x'=Ax+f(x)$ is topologically conjugated to $x'=Ax$.\\
Moreover,the  homeomorphism $H(x)$ is Lipschitzian, but the inverse $G(x)$ is H\"{o}lder continuous, i.e.,  for $x,\bar{x}\in\mathbb{R}^n$, there exist positive constants $p,q>0,0<\beta<1$ such that
\begin{equation*}
   \begin{cases}
   \|H(x)-H(\bar{x})\| \leq  p\|x-\bar{x}\|, \\
   \|G(x)-G(\bar{x})\|\leq q\|x-\bar{x}\|^{\beta}.
   \end{cases}
   \end{equation*}
\end{corollary}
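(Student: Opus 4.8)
The plan is to obtain Corollary \ref{global-hg} as a direct specialization of Theorems \ref{theorem_2.1} and \ref{theorem_2.2} to time-independent data, so that essentially nothing new has to be proved beyond checking hypotheses and then removing the $t$-dependence. First I would recall the classical fact that a hyperbolic constant matrix $A$ generates a linear system $x'=Ax$ possessing an exponential dichotomy on $\mathbb{R}$ with a \emph{constant} projection $P$ (the spectral projection onto the stable subspace) and with constants precisely the $k,\alpha>0$ appearing in \eqref{(2.3)}. Since $f$ is globally Lipschitz with constant $r$ and bounded by $\mu$, the functions $r(t)\equiv r$ and $\mu(t)\equiv\mu$ are (trivially) locally integrable, and a one-line computation gives
\begin{equation*}
\mathcal{L}_{\alpha}(\mu)(t)=\mu\int_{-\infty}^{\infty}e^{-\alpha|t-s|}\,ds=\frac{2\mu}{\alpha},\qquad \mathcal{L}_{\alpha}(r)(t)=\frac{2r}{\alpha},
\end{equation*}
for every $t\in\mathbb{R}$. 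Hence $\sup_{t}\mathcal{L}_{\alpha}(\mu)(t)=2\mu/\alpha<\infty$, and the hypothesis $2rk<\alpha$ is exactly $\theta:=2r/\alpha<k^{-1}=K^{-1}$. Thus all assumptions of Theorem \ref{theorem_2.1} are met, which already yields a homeomorphic conjugacy between $x'=Ax+f(x)$ and $x'=Ax$ together with the uniform bounds \eqref{estimacion_H}--\eqref{estimacion_G}.

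Next I would dispatch the regularity claim by verifying the extra hypothesis of Theorem \ref{theorem_2.2}. Because $2rk<\alpha$ is strict, I may choose $\alpha_{1}\in(2rk,\alpha)$ and set $\alpha_{2}:=\alpha-\alpha_{1}\in(0,\alpha)$, so that the decomposition $\alpha=\alpha_{1}+\alpha_{2}$ required in Lemmas \ref{lemma_3.1.1}--\ref{lemma_3.1.2} holds. For the constant function $r(t)\equiv r$ one has $\mathcal{L}_{\alpha_{1}}(r)(t)=2r/\alpha_{1}$ for all $t$, and the choice of $\alpha_{1}$ gives $\sup_{t}\mathcal{L}_{\alpha_{1}}(r)(t)=2r/\alpha_{1}<k^{-1}=K^{-1}$, i.e.\ $\tilde\theta<K^{-1}$. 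Theorem \ref{theorem_2.2} then applies and produces constants $p,q>0$ and $\beta\in(0,1)$ with $\|H(t,x)-H(t,\bar x)\|\leq p\|x-\bar x\|$ and $\|G(t,x)-G(t,\bar x)\|\leq q\|x-\bar x\|^{\beta}$.

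Finally I would remove the time dependence. Since neither $A$ nor $f$ depends on $t$, for each $\tau\in\mathbb{R}$ the translated map $(t,x)\mapsto H(t+\tau,x)$ is again an equivalent function in the sense of Definition \ref{definition_2.1}: time translation sends solutions to solutions and preserves properties (i)--(iv) as well as the uniform bound (ii). Invoking the uniqueness of the equivalent function within the class satisfying that uniform bound (which is part of the fixed-point construction underlying Theorem \ref{theorem_2.1}), one gets $H(t+\tau,x)=H(t,x)$ for all $\tau$, hence $H(t,x)=:H(x)$ and $G(t,x)=:G(x)$ are independent of $t$; restating the two inequalities of the previous paragraph with the $t$-slot suppressed gives precisely the asserted bounds for $x,\bar x\in\mathbb{R}^{n}$. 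The only step that is not pure bookkeeping is this last one: one must genuinely appeal to the uniqueness of the bounded conjugacy to justify dropping the $t$-dependence, whereas everything else amounts to substituting constants into hypotheses already verified in Theorems \ref{theorem_2.1} and \ref{theorem_2.2}.
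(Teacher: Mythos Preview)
Your proposal is correct and is precisely the intended derivation: the paper gives no proof at all for this corollary, merely prefacing it with the remark that when $A(t)\equiv A$ the systems become autonomous, so everything you wrote is exactly the verification one is meant to supply. Your computations $\mathcal{L}_{\alpha}(r)=2r/\alpha$, the identification $2rk<\alpha\Leftrightarrow\theta<K^{-1}$, and the choice of $\alpha_{1}\in(2rk,\alpha)$ to satisfy the extra hypothesis of Theorem \ref{theorem_2.2} are all right; the only point the paper leaves genuinely implicit is the time-independence of $H$, and your uniqueness argument handles it (one could equally read it off the explicit formula for $h(t,(t,\xi))$ in Lemma \ref{lemma_3.2} after the change of variable $s\mapsto s+t$, using $X(s,t,\xi)=X(s-t,0,\xi)$ in the autonomous case).
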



}

\begin{corollary}
  In the nonuniform case, that is, the linear system admits a nonuniform exponential dichotomy instead of the uniform exponential dichotomy (\cite{B-V1,B-V2,B-V3,B-V4,Xia-Zhang}), Theorems \ref{theorem_2.1}, \ref{theorem-local} and \ref{theorem_2.2} are true for
$$\tilde{\mu}(t)=\mu(t)\exp\{-\epsilon|t|\} \quad \mathrm{and} \quad \tilde{r}(t)=r(t)\exp\{-\epsilon|t|\},$$
where $\mu(t)$ and $r(t)$ are given in \eqref{(2.5)}.
\end{corollary}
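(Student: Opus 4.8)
The plan is to reduce the nonuniform setting to the uniform one already treated in Theorems \ref{theorem_2.1}, \ref{theorem-local} and \ref{theorem_2.2}, by exploiting an exact cancellation between the nonuniformity weight carried by the Green kernel and the weight $\exp\{-\epsilon|t|\}$ built into $\tilde\mu$ and $\tilde r$. Recall that a nonuniform exponential dichotomy means that \eqref{(2.3)} is replaced by
\begin{equation*}
\|U(t,s)P(s)\|\leq K\exp\{-\alpha(t-s)\}\exp\{\epsilon|s|\},\ \ t\geq s,\qquad \|U(t,s)(I-P(s))\|\leq K\exp\{\alpha(t-s)\}\exp\{\epsilon|s|\},\ \ t\leq s,
\end{equation*}
so that the associated Green function obeys $\|k(t,s)\|\leq K\exp\{-\alpha|t-s|\}\exp\{\epsilon|s|\}$. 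First I would record the resulting bound on the Green operator: if $\|\phi(s)\|\leq\tilde\mu(s)=\mu(s)\exp\{-\epsilon|s|\}$ then
\begin{equation*}
\|\mathcal{K}(\phi)(t)\|\leq K\int_{-\infty}^{\infty}\exp\{-\alpha|t-s|\}\exp\{\epsilon|s|\}\,\mu(s)\exp\{-\epsilon|s|\}\,ds=K\,\mathcal{L}_{\alpha}(\mu)(t),
\end{equation*}
and likewise, replacing $r$ by $\tilde r(s)=r(s)\exp\{-\epsilon|s|\}$ in every Lipschitz-type estimate produces a factor $\exp\{\epsilon|s|\}$ from $k(t,s)$ that cancels the $\exp\{-\epsilon|s|\}$ from $\tilde r$, leaving $K\,\mathcal{L}_{\alpha}(r)(t)$.

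With this cancellation in hand, the contraction hypotheses required by Theorem \ref{theorem_2.1} are exactly $\sup_t\mathcal{L}_{\alpha}(\mu)(t)<\infty$ and $\sup_t\mathcal{L}_{\alpha}(r)(t)=\theta<K^{-1}$, now stated in terms of the unweighted $\mu,r$, so the fixed-point construction of $H$ and $G$, together with the uniform bounds $\|H(t,x)-x\|\leq K\|\mathcal{L}_{\alpha}(\mu)\|_{\infty}$ and $\|G(t,x)-x\|\leq K\|\mathcal{L}_{\alpha}(\mu)\|_{\infty}$, goes through verbatim; the local version, Theorem \ref{theorem-local}, then follows from the same radial-extension argument of Lemma \ref{local} applied to $\tilde f$ (which produces the Lipschitz bound $2\tilde r$, hence the smallness threshold $2\theta<K^{-1}$). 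For the regularity statement, Theorem \ref{theorem_2.2}, I would revisit the dichotomy-inequality step: the integral inequalities of the type \eqref{dic1}--\eqref{dic2} that govern the Lipschitz estimate for $H$ and the H\"older estimate for $G$ now carry the coefficient $b(s)=\tilde r(s)\exp\{\epsilon|s|\}=r(s)$ after the same cancellation, so the hypothesis $\sup_t\mathcal{L}_{\alpha_1}(r)(t)=\tilde\theta<K^{-1}$ is precisely the smallness condition \eqref{condicion_theta} demanded by Lemmas \ref{lemma_3.1.1}--\ref{lemma_3.1.2}, and the conclusions --- the bound $p\|x-\bar x\|$ for $H$ and $q\|x-\bar x\|^{\beta}$ for $G$ --- are obtained as before.

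The main obstacle is not analytic but a matter of careful bookkeeping: one must verify that \emph{every} occurrence of a kernel $k(t,s)$ or of a raw dichotomy bound is paired with a factor of $r$ or $\mu$ carrying the matching weight $\exp\{-\epsilon|s|\}$, so that no uncancelled $\exp\{\epsilon|s|\}$ survives to destroy the uniform boundedness in Definition \ref{definition_2.1}(ii)--(iii). Two points deserve explicit checking. First, the solutions $\varphi(s;t,\xi)$ of the nonlinear system that enter the Lipschitz and H\"older estimates must still obey the global bounds that make the quantity $u(t)=\|H(t,x)-H(t,\bar x)\|$ (respectively its analogue for $G$) bounded, so that it qualifies for the ``bounded for $s=\infty$'' hypothesis of Lemmas \ref{lemma_3.1.1}--\ref{lemma_3.1.2}; this is a consequence of the uniform bound on $H-\mathrm{id}$ established in the previous step. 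Second, if one adopts instead the convention placing the nonuniformity on the current time, $\|k(t,s)\|\leq K\exp\{-\alpha|t-s|\}\exp\{\epsilon|t|\}$, the pointwise cancellation fails, and one must first conjugate the whole problem by $\exp\{-\epsilon|t|\}$, i.e. work in the Banach space $\{\phi:\sup_t\exp\{-\epsilon|t|\}\|\phi(t)\|<\infty\}$, after which the estimates again reduce to the uniform case; the statement should make explicit which of the two standard conventions for the nonuniform dichotomy is intended.
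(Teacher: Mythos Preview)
Your proposal is correct and follows essentially the same route as the paper: the paper's own justification is the single computation
\[
\mathcal{L}_{\alpha}(\tilde{\mu})(t)=\int_{-\infty}^{\infty}\exp\{-\alpha |t-s|+\epsilon|s|\}\,\mu(s)\exp\{-\epsilon|s|\}\,ds=\mathcal{L}_{\alpha}(\mu)(t),
\]
together with the analogous identity for $\tilde r$, i.e.\ exactly the cancellation of the nonuniform weight $\exp\{\epsilon|s|\}$ from the Green kernel against the $\exp\{-\epsilon|s|\}$ built into $\tilde\mu,\tilde r$ that you describe. Your additional bookkeeping remarks (checking that every kernel occurrence is paired with a weighted $\tilde\mu$ or $\tilde r$, and the caveat about the alternative convention with the weight on $t$) go beyond what the paper records but are in the same spirit.
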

\noindent
In fact, we can see that
 \[ \mathcal{L}_{\alpha}(\tilde{\mu})(t)=\int_{-\infty}^{\infty}\exp\{-\alpha |t-s|+\epsilon|s|\}\mu(s)\exp\{-\epsilon|s|\}ds
    =\mathcal{L}_{\alpha}(\mu)(t),
 \]
similarly, $ \mathcal{L}_{\alpha}(\tilde{r})(t)=\mathcal{L}_{\alpha}(r)(t)$. Thus, all conditions of these theorems in the nonuniform case are satisfied.


\begin{remark}\label{remark_2.1}
Lemma \ref{lemma_3.1} shows that a big class of functions $\mu,r$ satisfy condition \eqref{c1_c2}.
$r,\mu\in L^p, 1\leq p\leq \infty, r$ with $\|r\|_p$ small enough.
If $r$ is uniformly bounded, it is possible to choose $\alpha=0$.
Note when $\mu(t)=\mu$ and $r(t)=r$ are constants, Theorem \ref{theorem_2.1} reduces to the classical Palmer linearization theorem.
We note that Palmer did not give a conclusion on the Lipschitz nor the H\"older continuity of $H$ .
It should be noted that $f(t,x)$ in our theorem could be unbounded or not uniformly Lipschitzian.
Note $\mu(t),r(t)$ are locally integrable (satisfying \eqref{c1_c2}), so they could be unbounded.
\end{remark}

\subsection{Illustrative examples and open conjecture}

\subsubsection{Illustrative examples to verify the higher regularity of homeomorphisms}

{
\newtheorem{Exam1}{Example}[section]
\begin{Exam1}\label{example2}
This example on the {\em global linearization}  shows that the homeomorphism $H$ is Lipschitzian, but its inverse is merely H\"older continuous.
\end{Exam1}
We consider the  hyperbolic equations on the unit circle, i.e.,
\begin{equation}\label{hyper-eq}
  \left(
    \begin{array}{c}
      x'_1 \\
      x'_2 \\
    \end{array}
  \right)=
  \left(
    \begin{array}{cc}
      -1 & 0 \\
      0 & 1 \\
    \end{array}
  \right)
  \left(
    \begin{array}{c}
      x_1 \\
      x_2 \\
    \end{array}
  \right)+
  \left(
    \begin{array}{c}
      f_1(x_1) \\
      f_2(x_2) \\
    \end{array}
  \right),
\end{equation}
where $f_1(x_1)$ and $f_2(x_2)$ is given by
\[
f_1(x_1)=
\begin{cases}
\epsilon x_1,&  0\leq x_1\leq 1,\\
 \epsilon x_1^{3},&  -1\leq x_1< 0,
\end{cases}
\quad \mathrm{and} \quad
f_2(x_2)=
\begin{cases}
-\epsilon x_2,&  0\leq x_2\leq 1,\\
 -\epsilon x_2^{3},&  -1\leq x_2< 0.
\end{cases}
\]
It is easy to see that $f$ is bounded and Lipschitzian.
Moreover, it is easy to obtain that  equation \eqref{hyper-eq} is topologically conjugated to its linear part
\begin{equation}\label{hyper-lin-eq}
  \left(
    \begin{array}{c}
      y'_1 \\
      y'_2 \\
    \end{array}
  \right)=
  \left(
    \begin{array}{cc}
      -1 & 0 \\
      0 & 1 \\
    \end{array}
  \right)
  \left(
    \begin{array}{c}
      y_1 \\
      y_2 \\
    \end{array}
  \right).
  \end{equation}
Thus the main purpose here is to construct an explicit formulae for $H=(H_1,H_2)^{T}$ and its inverse $G=(G_1,G_2)^{T}=H^{-1}$.
 For $t\geq 0$, we firstly consider the subsystem
\begin{equation}\label{n1}
  x'_1=-x_1+f_1(x_1).
\end{equation}
Notice that
\[
-x_1+f_1(x_1)
\begin{cases}
  <0, & \mathrm{if}\; 0<x_1\leq 1, \\
  =0, & \mathrm{if}\; x_1=0, \\
  >0, & \mathrm{if}\; -1\leq x_1< 0.
\end{cases}
\]
Thus a solution is either always $0$, always $<0$ or always $>0$.

  Firstly, $H_1(0)=0$ since $0$ is a solution of \eqref{n1} and $H_1(0)$ is the unique solution of $y'_1=-y_1$. But $0$ is such a solution.

  Secondly, we consider $0<x_1(t)\leq 1$. Clearly, $x_1(t)$ is strictly decreasing, i.e., $x_1(t)\rightarrow 0$ as $t\rightarrow +\infty$;
$x_1(t)\rightarrow 1$ as $t\rightarrow 0$.
Therefore, there must exists a unique time $t_{0}$ such that $x_1(t_{0})=1$. We set $t_{0}=0$.
If $t>0$, then $0<x_1(t)<1$ and so $x_1'(t)=-x_1(t)+\epsilon x_1(t)$ with $x_1(0)=1$. Hence,
\[ x_1(t)=e^{(-1+\epsilon)t}, \quad t\geq 0.\]
We need to find the unique solution $y_1 (t)$ of $y'_1=-y_1$ such that $\|y_1 (t)-x_1(t)\|$ is bounded.
Looking at $x_1(t)$ when $t\geq 0$, we see that $y_1 (t)=(1-\epsilon)e^{-t}$.
Hence for all $t\geq 0$,
\[H_1(x_1(t))=(1-\epsilon)e^{-t}.\]
Then
\[ H_1(1)=H_1(x_1(0))=1-\epsilon.\]
If $0<\xi_1<1$, then there exists a unique time $t>0$ such that $x_1(t)=e^{(-1+\epsilon)t}=\xi_1$. Then
\[ H_1(\xi)=H_1(x_1(t))=(1-\epsilon)e^{-t}=(1-\epsilon)x_1(t)^{\frac{1}{1-\epsilon}}
    =(1-\epsilon)\xi_1^{\frac{1}{1-\epsilon}}.\]
Therefore,
\[
H(x_1)=(1-\epsilon)x_1^{\frac{1}{1-\epsilon}},   \quad 0<x_1\leq 1.
\]

 Let us now consider $-1\leq x_1(t)<0$, clearly $x_1(t)$ is strictly increasing, i.e., $x_1(t)\rightarrow 0$ as $t\rightarrow \infty$;
$x_1(t)\rightarrow -1$ as $t\rightarrow0$.
So there must exists a unique time $t_{0}$ such that $x_1(t_{0})=-1$. We set $t_{0}=0$.
If $t>0$, then $-1<x_1(t)<0$ and so $x_1'(t)=-x_1(t)+\epsilon x_1^{3}(t)$ with $x_1(0)=-1$. Letting $z_1=x_1^{-2}$, then $z_1'=2z-2\epsilon$ with $z_1(0)=1$. Hence,
$z_1(t)=(1-\epsilon)e^{2t}+\epsilon$, that is,
\[ x_1(t)=-\left[(1-\epsilon)e^{2t}+\epsilon\right]^{-\frac{1}{2}}, \quad t\geq 0.\]
We need to find the unique solution $y_1(t)$ of $y'_1=-y_1$ such that $|y_1(t)-x_1(t)|$ is bounded.
Looking at $x_1(t)$ when $t\geq 0$, we see that $y_1(t)=(\epsilon-1)e^{-t}$.
Hence for all $t\geq 0$,
\[H_1(x_1(t))=(\epsilon-1)e^{-t}.\]
Then
\[ H_1(1)=H_1(x_1(0))=\epsilon-1.\]
If $-1<\xi_1<0$, then there exists a unique time $t>0$ such that
\[x_1(t)=-\left[(1-\epsilon)e^{2t}+\epsilon\right]^{-\frac{1}{2}}=\xi_1.\]
 Then
\[ H_1(\xi_1)=H_1(x_1(t))=(\epsilon-1)e^{-t}=-(1-\epsilon)^{\frac{3}{2}}\left((-x_1(t))^{-2}-\epsilon\right)^{-\frac{1}{2}}
    =-(1-\epsilon)^{\frac{3}{2}}\left((-\xi_1)^{-2}-\epsilon\right)^{-\frac{1}{2}}.\]
Thus  we obtain that
\[
H_1(x_1)= -(1-\epsilon)^{\frac{3}{2}}\left((-x_1)^{-2}-\epsilon\right)^{-\frac{1}{2}},  \quad -1\leq x_1< 0.
\]
Summarizing we have found that
\[
H_1(x_1)=
\begin{cases}
   (1-\epsilon)x_1^{\frac{1}{1-\epsilon}}, &  0<x_1\leq 1, \\
  0, & x_1=0,\\
 -(1-\epsilon)^{\frac{3}{2}}\left((-x_1)^{-2}-\epsilon\right)^{-\frac{1}{2}}, &  -1\leq x_1< 0.
\end{cases}
\]
We next claim that $H_1$ is a continuous function, but it is not $C^{1}$. In fact, we only say that $H_1$ is continuous at $0$,
but is not $C^{1}$ at $0$.\\
$H_1(x_1)$ is continuous at $x_1=0$:
   \[\begin{split}
    &\lim\limits_{x_1\rightarrow 0^{+}} (1-\epsilon)x_1^{\frac{1}{1-\epsilon}}=0,\\
    &\lim\limits_{x_1\rightarrow 0^{-}} -(1-\epsilon)^{\frac{3}{2}}\left((-x_1)^{-2}-\epsilon\right)^{-\frac{1}{2}}=0.
   \end{split}\]
Hence, $H_1(x_1)$ is continuous, but the following fact proves that $H_1$ is not $C^{1}$ at $0$. Clearly,
for $0<x_1\leq 1$, $H'_1(x_1)=x_1^{\frac{\epsilon}{1-\epsilon}}$,
and for $ -1\leq x_1< 0$, $H'_1(x_1)=(1-\epsilon)^{\frac{3}{2}}(1-\epsilon (-x_1)^{2})^{-\frac{3}{2}}$. Therefore,
   \[\begin{split}
    &\lim\limits_{x_1\rightarrow 0^{+}} x_1^{\frac{\epsilon}{1-\epsilon}}=0,\\
    &\lim\limits_{x_1\rightarrow 0^{-}}  (1-\epsilon)^{\frac{3}{2}}(1-\epsilon (-x_1)^{2})^{-\frac{3}{2}}=(1-\epsilon)^{\frac{3}{2}},
   \end{split}\]
which implies that $H_1(x_1)$ is not in $C^{1}$.
Fortunately, $H_1(x_1)$ is Lipschitz continuous,
since $H'_1(x_1)$ is continuous at $x_1$ except for $x_1=0$, and it is bounded with $\|H'_1\|\leq 1$.
So function $H_1(x_1)$ is globally Lipschitz continuous with Lipschitz constant $L = 1$, but is not in $C^{1}$.

However, the inverse function $G_1=H_1^{-1}$ is
\[
G_1(y_1)=
\begin{cases}
(\frac{y_1}{1-\epsilon})^{1-\epsilon}, &  0<y_1\leq 1-\epsilon, \\
  0, & y_1=0,\\
 -\left((1-\epsilon)^{-\frac{4}{3}}(-y_1)^{-2}+\epsilon\right)^{-\frac{1}{2}}, &  -1+\epsilon\leq y_1< 0.
\end{cases}
\]
Obviously, $G_1(y_1)$ is continuous at $0$. So $G_1(y_1)$ is a continuous function. However, this is not Lipschitz continuous since $y_1^{1-\epsilon}$ is not Lipschitz, as $0<1-\epsilon<1$.

Secondly, for $t\leq 0$, we consider the subsystem
\[ x_2'=x_2+f_2(x_2).\]
Similar to the procedure just shown, we can obtain that
\[
H_2(x_2)=
\begin{cases}
   (1-\epsilon)x_2^{\frac{1}{1-\epsilon}}, &  0<x_2\leq 1, \\
  0, & x_2=0,\\
 -(1-\epsilon)^{\frac{3}{2}}\left((-x_2)^{-2}-\epsilon\right)^{-\frac{1}{2}}, &  -1\leq x_2< 0,
\end{cases}
\]
and
\[
G_2(y_2)=
\begin{cases}
(\frac{y_2}{1-\epsilon})^{1-\epsilon}, &  0<y_2\leq 1-\epsilon, \\
  0, & y_2=0,\\
 -\left((1-\epsilon)^{-\frac{4}{3}}(-y_2)^{-2}+\epsilon\right)^{-\frac{1}{2}}, &  -1+\epsilon\leq y_2< 0.
\end{cases}
\]
Hence, $H_2$ is Lipschitzian, but $G_2$ is only H\"{o}lder continuous.
Therefore,  Theorem \ref{theorem_2.2} is verified.

\begin{Exam1}\label{example_2.3}
This example on the {\em local linearization} is to show that the homeomorphism $H$ is Lipschitzian, but its inverse is merely H\"older continuous.
\end{Exam1}
We  consider the following non-autonomous system
\begin{equation}\label{ex-1}
  x'=-x+f(t,x),
\end{equation}
where $f(t,x)$ is given by
\begin{equation*}
  f(t,x)=\left\{
           \begin{array}{ll}
             0, & \|x\|\leq \epsilon, \\
             \frac{2 e^{-t}}{e^{t}+e^{-t}}x, & \|x\|\geq \delta,
           \end{array}
         \right.
\end{equation*}
for some arbitrarily chosen $0<\epsilon<\delta$. We assume that $f$ connects these two value smoothly.
Thus the vector field of Eq. \eqref{ex-1} is nonlinear. When $\|x\|\leq \epsilon$, it is identical to the linear flow.
Hence, we only need to limit ourselves to $\|x\|\geq \delta$. For $\|x\|\geq \delta$,
we can check that  $x(t)=\frac{2}{e^{t}+e^{-t}}$ is a bounded solution with the initial value $x(0)=1$.

Now we set $H(t,x)=\frac{1}{x}-\frac{e^{t}}{2}$. Notice that
\[ H(t,x(t))=\frac{1}{x(t)}-\frac{e^{t}}{2}=\frac{e^{t}+e^{-t}}{2}-\frac{e^{t}}{2}=\frac{1}{2} e^{-t},\]
which implies that $H(t,x(t))$ is a solution of $y'=-y$.
To show its regularity, take any $\|x_1\|, \|x_2\| \geq \delta$, we have
\[\|H(t,x_1)-H(t,x_2)\|= \left\|\frac{1}{x_1}-\frac{1}{x_2}\right\| \leq \frac{1}{\delta^{2}} \|x_1-x_2\|. \]
It means that $H$ is Lipschitzian for $\|x\|\geq \delta$. Moreover for $\|y\|\leq \frac{1}{\delta}$, $G:=H^{-1}=\frac{2}{e^{t}+2y}$ and
\[\|G(t,y_1)-G(t,y_2)\|=\frac{4\|y_2-y_1\|}{(e^{t}+2y_1)(e^{t}+2y_2)},\]
when $t\rightarrow -\infty$, $\|G(t,y_1)-G(t,y_2)\|=\frac{\|y_2-y_1\|}{\|y_1 y_2\|} \rightarrow \infty$.
Therefore, $G$ is not Lipschitzian. If we take $\|y_1-y_2\|<1$, then there exists $0<q<1$ such that
 $\|G(t,y_1)-G(t,y_2)\|\leq \|y_1-y_2\|^{q}$.

\begin{Exam1}\label{example_2.1}
The following example shows that $f(t,x)$ in our conditions \eqref{(2.5)} could be unbounded, nor uniformly Lipschitzian. Thus, it is weaker than previous works on the Palmer's linearization theorem.
\end{Exam1}
We construct a continuous function $f(t,x)$ which is unbounded, not uniformly Lipschitzian, but locally integrable.
Considering $[0,\infty)$, for any positive constant $c$ and integer $m$, let
\[
\bar{g}(t)=
     \begin{cases}
       0, &\quad\text{if } t\in [0,1),\\
       cm^2t-cm^3, &\quad\text{if } t\in \left[m,m+\tfrac{1}{2m}\right), \\
       -cm^2t+cm^3+cm, &\quad\text{if } t\in \left[m+\tfrac{1}{2m},m+\tfrac{1}{m}\right), \\
       0, &\quad\text{if } t\in \left[m+\tfrac{1}{m},m+1\right).\\
     \end{cases}
\]
Note that $\bar{g}(t)$ is continuous on $[0,\infty)$. Let $\mu(t)$ the continuous function on $\mathbb{R}$:
\[
\mu(t)=
     \begin{cases}
       \bar{g}(t), &\quad\text{if } t\geq 0,\\
       \bar{g}(-t), &\quad\text{if } t<0.\\
     \end{cases}
\]
Thus,
$$f(t,x)=\mu(t)\sin(x)$$
is continuous on $\mathbb{R}\times \mathbb{R}^{2} $. It is easy to see that for any $(t,x),(t,\bar{x})\in \mathbb{R}\times \mathbb{R}^{2}$,
\begin{eqnarray*}
&\|f(t,x)-f(t,\bar{x})\|\leq \mu(t)\|x-\bar{x}\|,\\
&\|f(t,x)\|\leq \mu(t)
\end{eqnarray*}
and
 $$\int_t^{t+1}\mu(s)ds\leq c.$$
However, we see that $\mu$ and $f$ are unbounded functions, since $$\mu\left(m+\dfrac{1}{2m}\right)\to +\infty, \text{  as }m\to\infty. $$
Consequently, $f(t,x)$ is not only unbounded, but also $f(t,x)$ is not uniformly Lipschitzian.

\subsubsection{Open conjecture}

The above two illustrative example show that our main results on the higher regularity of homeomorphisms are correct.
That is, the homeomorphism $H$ is Lipschitzian, and the inverse of the homeomorphism is H\"older continuous. In particular, in Example \ref{example2}, it is shown that homeomorphism $H$ is Lipschitzian, {\bf but not $C^1$}; its inverse is H\"older continuous, {\bf but not Lipschitzian}. Moreover, it is difficult to verify that both the homeomorphism $H$ and its inverse in this example are unique, respectively. Therefore, from this example, we assert that   the homeomorphism $H$ is Lipschitzian and its inverse is H\"older continuous in the Hartman-Grobman, {\bf and the regularity of the homeomorphisms is sharp.  That is to say, the regularity of the homeomorphisms could not be improved any more.} But in this situation, it is only a conjecture from the example. We need a strict proof, but it is an open problem now.

}

\section{Preliminary results}
\subsection{Preliminary results for the existence of homeomorphisms}
In what follows, we always suppose that the conditions of Theorem \ref{theorem_2.1} are satisfied.
Let $X(t,t_0,x)$ be a solution of  system \eqref{nueva_1} satisfying the initial condition $X(t_0)=x$ and
$Y(t,t_0,y)$ is a solution of  system \eqref{(2.6)} satisfying the initial condition $Y(t_0)=y$ .
To prove the main results, we divide our proof into several lemmas.

\begin{lemma}\label{lemma_3.2}
For each $(\tau,\xi)$, the system
\begin{equation}
Z'=A(t)Z-f(t,X(t,\tau,\xi))\label{(3.2)}
\end{equation}
has a unique bounded solution $h(t,(\tau,\xi))$  with $\|h(t,(\tau,\xi))\|\leq K\parallel \mathcal{L}_{\alpha}(\mu)\parallel _\infty$.
\end{lemma}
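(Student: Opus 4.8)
\medskip
\noindent\textbf{Proof proposal.}
The plan is to exhibit the bounded solution explicitly through the Green operator $\mathcal{K}$ and then to read off uniqueness directly from the dichotomy estimates \eqref{(2.3)} applied to the homogeneous equation $W'=A(t)W$.

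First I would fix $(\tau,\xi)$ and set $\phi(t):=-f(t,X(t,\tau,\xi))$. Since $f$ is continuous and $t\mapsto X(t,\tau,\xi)$ is continuous, $\phi$ is continuous on $\mathbb{R}$, and the growth estimate in \eqref{(2.5)} gives $\|\phi(t)\|\le\mu(t)$. I then take as candidate solution
\[
 h(t,(\tau,\xi)):=\mathcal{K}(\phi)(t)=\int_{-\infty}^{\infty}k(t,s)\,\phi(s)\,ds .
\]
Because $\|k(t,s)\|\le K\exp\{-\alpha|t-s|\}$, this improper integral converges absolutely, and using $\|\mathcal{K}(\phi)\|\le\mathcal{L}_{\alpha}(\|\phi\|)$ together with $\|\phi\|\le\mu$,
\[
 \|h(t,(\tau,\xi))\|\le K\int_{-\infty}^{\infty}\exp\{-\alpha|t-s|\}\mu(s)\,ds=K\,\mathcal{L}_{\alpha}(\mu)(t)\le K\,\|\mathcal{L}_{\alpha}(\mu)\|_{\infty},
\]
the finiteness being exactly the first condition in \eqref{c1_c2}. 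To check that $h$ solves \eqref{(3.2)}, I would split the integral at $s=t$,
\[
 h(t,(\tau,\xi))=\int_{-\infty}^{t}U(t,s)P(s)\phi(s)\,ds-\int_{t}^{\infty}U(t,s)(I-P(s))\phi(s)\,ds ,
\]
differentiate in $t$ under the integral sign (legitimate by the exponential bound on $k$), use $\partial_t U(t,s)=A(t)U(t,s)$, and note that the two boundary contributions at $s=t$ add up to $P(t)\phi(t)+(I-P(t))\phi(t)=\phi(t)$; this yields $h'=A(t)h+\phi=A(t)h-f(t,X(t,\tau,\xi))$.

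For uniqueness, suppose $Z_1,Z_2$ are two bounded solutions of \eqref{(3.2)}. Then $W:=Z_1-Z_2$ is a bounded solution of the homogeneous equation $W'=A(t)W$, so $W(t)=U(t,s)W(s)$ for all $t,s$. Using the intertwining relation $U(t,s)P(s)=P(t)U(t,s)$, for $s\le t$ I obtain $P(t)W(t)=U(t,s)P(s)W(s)$, whence $\|P(t)W(t)\|\le K\exp\{-\alpha(t-s)\}\sup_{r}\|W(r)\|\to0$ as $s\to-\infty$; symmetrically, for $s\ge t$, $(I-P(t))W(t)=U(t,s)(I-P(s))W(s)$ gives $\|(I-P(t))W(t)\|\le K\exp\{-\alpha(s-t)\}\sup_{r}\|W(r)\|\to0$ as $s\to+\infty$. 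Hence $W(t)=P(t)W(t)+(I-P(t))W(t)=0$ for every $t$, and therefore $Z_1\equiv Z_2$.

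The one genuinely delicate point is that $\mu$, and hence $\phi$, need not be bounded (as Example \ref{example_2.1} shows), so $h$ is really an improper integral; the absolute convergence of $\mathcal{K}(\phi)$ and the differentiation under the integral sign have to be justified using only the exponential decay of the Green kernel and the finiteness $\sup_{t}\mathcal{L}_{\alpha}(\mu)(t)<\infty$ — both of which follow by dominated convergence from $\|k(t,s)\|\le K\exp\{-\alpha|t-s|\}$ and the integrability hypothesis on $\mu$ (cf. Lemma \ref{lemma_3.1}). The variation-of-constants identity $h'=A(t)h+\phi$ itself is then routine.
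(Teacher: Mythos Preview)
Your proposal is correct and follows essentially the same route as the paper: construct $h$ explicitly via the Green operator $\mathcal{K}$, bound it by $K\|\mathcal{L}_\alpha(\mu)\|_\infty$ using \eqref{(2.3)} and \eqref{(2.5)}, verify the differential equation by differentiation, and deduce uniqueness from the fact that the homogeneous equation $W'=A(t)W$ has no nontrivial bounded solution under exponential dichotomy. The paper simply asserts the last two points (``differentiating it, it is easy to see\dots'' and ``this implies that the bounded solution is unique''), whereas you spell out the boundary-term computation and the projection argument; your added remark about the improper integral when $\mu$ is unbounded is a fair point the paper passes over in silence.
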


\begin{proof}
For any fixed $(\tau,\xi))$, let
\begin{eqnarray*}
 &h(t,(\tau,\xi))=-\mathcal{K}(f(\cdot,X(\cdot,\tau,\xi))(t)=&-\int_{-\infty}^t U(t,s)P(s)f(s,X(s,\tau,\xi))ds\\
 &&+\int_{t}^{\infty}U(t,s)(I-P(s))f(s,X(s,\tau,\xi))ds.
\end{eqnarray*}
Differentiating it, it is easy to see that $h(t,(\tau,\xi))$ is a solution of the system \eqref{(3.2)}. It follows from \eqref{(2.3)} and \eqref{(2.5)} that
\begin{eqnarray*}
 &\|h(t,(\tau,\xi))\|&\leq \int_{-\infty}^t K \mu(s)\exp\{-\alpha (t-s)\}ds + \int_{t}^{\infty} K \mu(s)\exp\{\alpha (t-s)\}ds \\
 &&\leq K\parallel \mathcal{L}_{\alpha}(\mu)\parallel _\infty,
\end{eqnarray*}
which implies that $h(t,(\tau,\xi))$ is a bounded solution of the system \eqref{(3.2)}. We claim that the bounded solution is unique. In fact, for any fixed $(\tau,\xi)$ , the system \eqref{(3.2)} is linearly inhomeogeneous,
and its linear system $Z'=A(t)Z$ has an exponential dichotomy. This implies that the bounded solution of \eqref{(3.2)} is unique.
\end{proof}

\begin{lemma}\label{lemma_3.3}
For each $(\tau,\xi)$, the system
\begin{equation}
 Z'=A(t)Z+f(t,Y(t,\tau,\xi)+Z)\label{(3.3)}
\end{equation}
has a unique bounded solution $g(t,(\tau,\xi))$, and $\|g(t,(\tau,\xi))\|\leq K\parallel \mathcal{L}_{\alpha}(\mu)\parallel _\infty$.
\end{lemma}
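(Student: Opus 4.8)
The plan is to recast \eqref{(3.3)} as a fixed point problem for the Green operator and to apply the Banach contraction principle on the space $\mathcal{C}_b=\mathcal{C}_b(\mathbb{R},\mathbb{R}^n)$ of bounded continuous functions with the supremum norm $\|\cdot\|_\infty$. Fix $(\tau,\xi)$. Since $Y(\cdot,\tau,\xi)$ is a globally defined solution of \eqref{(2.6)}, the map $s\mapsto f(s,Y(s,\tau,\xi)+Z(s))$ is continuous for every $Z\in\mathcal{C}_b$ and is dominated by the locally integrable function $\mu$, so the integral defining the Green operator below converges. Define $\mathcal{T}:\mathcal{C}_b\to\mathcal{C}_b$ by
\[
 (\mathcal{T}Z)(t)=\mathcal{K}\big(f(\cdot,Y(\cdot,\tau,\xi)+Z(\cdot))\big)(t)=\int_{-\infty}^{t}U(t,s)P(s)f(s,Y(s,\tau,\xi)+Z(s))\,ds-\int_{t}^{\infty}U(t,s)(I-P(s))f(s,Y(s,\tau,\xi)+Z(s))\,ds .
\]
Exactly as in the proof of Lemma~\ref{lemma_3.2}, differentiating this identity shows that every fixed point of $\mathcal{T}$ is a bounded solution of \eqref{(3.3)}; conversely, every bounded solution of \eqref{(3.3)} is a fixed point of $\mathcal{T}$, because the homogeneous system $Z'=A(t)Z$ has an exponential dichotomy and hence admits at most one bounded solution for a given locally integrable forcing, namely the one furnished by the Green operator.

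Next I would verify the two estimates that drive the argument. For the self-map property, the bound $\|f(s,\cdot)\|\le\mu(s)$ together with \eqref{(2.3)} gives, for every $Z\in\mathcal{C}_b$ and every $t$,
\[
 \|(\mathcal{T}Z)(t)\|\le\int_{-\infty}^{t}K\mu(s)e^{-\alpha(t-s)}\,ds+\int_{t}^{\infty}K\mu(s)e^{\alpha(t-s)}\,ds=K\,\mathcal{L}_{\alpha}(\mu)(t)\le K\,\|\mathcal{L}_{\alpha}(\mu)\|_\infty ,
\]
so $\mathcal{T}$ maps $\mathcal{C}_b$ into the closed ball of radius $K\|\mathcal{L}_{\alpha}(\mu)\|_\infty$, which already yields the asserted bound on $g$. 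For the contraction property, the Lipschitz estimate in \eqref{(2.5)} gives
\[
 \|(\mathcal{T}Z_1)(t)-(\mathcal{T}Z_2)(t)\|\le K\Big(\int_{-\infty}^{t}r(s)e^{-\alpha(t-s)}\,ds+\int_{t}^{\infty}r(s)e^{\alpha(t-s)}\,ds\Big)\|Z_1-Z_2\|_\infty=K\,\mathcal{L}_{\alpha}(r)(t)\,\|Z_1-Z_2\|_\infty\le K\theta\,\|Z_1-Z_2\|_\infty ,
\]
with $K\theta<1$ by \eqref{c1_c2}. Hence $\mathcal{T}$ is a contraction on the complete metric space $(\mathcal{C}_b,\|\cdot\|_\infty)$ and possesses a unique fixed point $g(\cdot,(\tau,\xi))$; by the first paragraph this is the unique bounded solution of \eqref{(3.3)}, and by the self-map estimate it satisfies $\|g(t,(\tau,\xi))\|\le K\|\mathcal{L}_{\alpha}(\mu)\|_\infty$.

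The scheme is essentially routine; the only points requiring care are the well-definedness of $\mathcal{T}$ and the ``converse'' direction in the first paragraph. Since $\mu$ and $r$ are only assumed locally integrable (and $f$ need not be bounded), the convergence of the Green-operator integrals and the uniqueness of the bounded solution of the inhomogeneous linear equation rest on the finiteness hypotheses $\sup_t\mathcal{L}_{\alpha}(\mu)(t)<\infty$ and $\sup_t\mathcal{L}_{\alpha}(r)(t)=\theta<K^{-1}$, rather than on boundedness of the data — this is the step I expect to need the most attention. In the local-linearization setting one first replaces $f$ by its radial extension $\tilde f$ of Lemma~\ref{local} (at the cost of a harmless factor $2$ absorbed into $r$), so that $f$ may be taken globally defined with the above bounds, after which the argument applies verbatim.
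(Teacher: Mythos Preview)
Your proposal is correct and follows essentially the same route as the paper: define the Green-operator map $\mathcal{T}$, use $\|f\|\le\mu$ for the self-map estimate and the Lipschitz bound with $r$ for the contraction estimate, then invoke the Banach fixed point theorem with $K\theta<1$. The only cosmetic difference is that the paper works on the closed ball $\mathbf{B}=\{Z:\|Z\|_\infty\le K\|\mathcal{L}_\alpha(\mu)\|_\infty\}$ and afterwards appeals to a ``standard argument'' for uniqueness among all bounded solutions, whereas you contract on the full space $\mathcal{C}_b$ and make the converse (bounded solution $\Rightarrow$ fixed point) explicit; your version is a bit more self-contained but not genuinely different.
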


\begin{proof}
Let $\mathbf{B}$ be the complete metric space of all the continuous bounded functions $Z(t)$, provided of supremum metric, with $\|Z(t)\|\leq K\parallel \mathcal{L}_{\alpha}(\mu)\parallel _\infty$.
For each $(\tau,\xi)$ and any $Z(t)\in \mathbf{B}$, define a mapping $\mathcal{T}$ as follows,
\begin{eqnarray*}
 &\mathcal{T}Z(t)=&\int_{-\infty}^t U(t,s)P(s)f(s,Y(s,\tau,\xi)+Z(s))ds\\
 &&-\int_{t}^{\infty}U(t,s)(I-P(s))f(s,Y(s,\tau,\xi)+Z(s))ds.
\end{eqnarray*}
\noindent
A simple computation leads to
\begin{equation*}
 \parallel \mathcal{T}Z(t)\parallel \leq K\mathcal{L}_{\alpha}(\mu)(t),
\end{equation*}
which implies that $\mathcal{T}\mathbf{B}\subset \mathbf{B}$. For any $Z_1(t),Z_2(t)\in \mathbf{B}$,
\begin{equation*}
 \|\mathcal{T}Z_1 (t)-\mathcal{T}Z_2 (t)\|\leq K \mathcal{L}_{\alpha}(r)(t)\parallel Z_1-Z_2\parallel .
\end{equation*}
Now, by \eqref{c1_c2} $K\theta<1$, then $\mathcal{T}$ has a unique fixed point, namely $Z_0(t)$, and
\begin{eqnarray*}
 &Z_0(t)=&\int_{-\infty}^t U(t,s)P(s)f(s,Y(s,\tau,\xi)+Z_0(s))ds\\
 &&-\int_{t}^{\infty}U(t,s)(I-P(s))f(s,Y(s,\tau,\xi)+Z_0(s))ds.
\end{eqnarray*}
It is easy to see that $Z_0(t)$ is a bounded solution of the system \eqref{(3.3)}.
From standard argument, the bounded solution is unique. We may call the unique solution $g(t,(\tau,\xi))$.
From the above proof, it is easy to see that $\|g(t,(\tau,\xi))\|\leq K\parallel \mathcal{L}_{\alpha}(\mu)\parallel _\infty$.
\end{proof}
Similarly, we have:
\begin{lemma}\label{lemma_3.4}
Let $x(t)$ be any solution of the system \eqref{eq1}. Then $Z(t)\equiv 0$ is the unique bounded solution of the system
\begin{equation}
Z'=A(t)Z+f(t,x(t)+Z)-f(t,x(t)). \label{(3.4)}
\end{equation}
\end{lemma}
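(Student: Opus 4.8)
The plan is to split the statement into the (trivial) existence part and the substantive uniqueness part, and to dispatch uniqueness by the same Green-operator contraction already used in Lemmas \ref{lemma_3.2} and \ref{lemma_3.3}. First I would note that $Z\equiv 0$ solves \eqref{(3.4)}, since then the right-hand side collapses to $f(t,x(t))-f(t,x(t))=0$; so all the content is in showing that \eqref{(3.4)} admits no other solution bounded on $\mathbb{R}$.

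Next I would take an arbitrary solution $Z(t)$ of \eqref{(3.4)} bounded on $\mathbb{R}$ and set $\psi(t):=f(t,x(t)+Z(t))-f(t,x(t))$, which is continuous on $\mathbb{R}$ and, by \eqref{(2.5)}, satisfies $\|\psi(t)\|\le r(t)\|Z(t)\|$ (or $2r(t)\|Z(t)\|$ in the local case, through Lemma \ref{local}). Then $Z$ is a bounded solution of the linear inhomogeneous equation $Z'=A(t)Z+\psi(t)$. Since \eqref{eq1} admits an exponential dichotomy on all of $\mathbb{R}$, the homogeneous equation $Z'=A(t)Z$ has no nonzero solution bounded on $\mathbb{R}$ — a solution bounded for $t\to+\infty$ must lie in the range of $P(s)$, one bounded for $t\to-\infty$ in its kernel — so the bounded solution of $Z'=A(t)Z+\psi(t)$ is unique and must coincide with the Green representation
$$Z(t)=\mathcal{K}(\psi)(t)=\int_{-\infty}^{\infty}k(t,s)\,\psi(s)\,ds,$$
the integral converging because $Z$ is bounded and $\mathcal{L}_{\alpha}(r)$ is finite.

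Then I would close the estimate: using \eqref{(2.3)},
$$\|Z(t)\|\le\int_{-\infty}^{\infty}K\exp\{-\alpha|t-s|\}\,\|\psi(s)\|\,ds\le K\Big(\sup_{s\in\mathbb{R}}\|Z(s)\|\Big)\mathcal{L}_{\alpha}(r)(t)\le K\theta\sup_{s\in\mathbb{R}}\|Z(s)\|,$$
and taking the supremum over $t$ yields $\sup_t\|Z(t)\|\le K\theta\sup_t\|Z(t)\|$ with $K\theta<1$ by \eqref{c1_c2}, forcing $Z\equiv 0$. Equivalently, one may package this by noting that $Z\mapsto\mathcal{K}\big(f(\cdot,x(\cdot)+Z(\cdot))-f(\cdot,x(\cdot))\big)$ is a contraction with constant $K\theta$ on the Banach space of bounded continuous $\mathbb{R}^{n}$-valued functions, of which both $Z$ and $0$ are fixed points.

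I do not expect a serious obstacle here. The one step that deserves care is the passage to the integral representation $Z=\mathcal{K}(\psi)$, i.e. the exclusion of a nonzero bounded homogeneous solution, which is exactly where the two-sided exponential dichotomy on $\mathbb{R}$ (not merely a one-sided decay estimate) is used; the remaining steps reuse the Gronwall/contraction mechanism already established for the earlier lemmas.
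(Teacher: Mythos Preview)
Your proposal is correct and follows essentially the same approach the paper intends: the paper does not write out a separate proof but prefaces Lemma~\ref{lemma_3.4} with ``Similarly, we have,'' pointing back to the contraction/Green-operator argument of Lemma~\ref{lemma_3.3}, which is precisely what you implement (and you even spell out the equivalent contraction formulation in your closing remark). The only place requiring care---the passage from boundedness to the representation $Z=\mathcal{K}(\psi)$ via the absence of nontrivial bounded homogeneous solutions---is exactly the point you flag, so there is no gap.
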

Note the importance in these results of the uniform boundedness of $\mathcal{L}_{\alpha}(\mu)(t).$\\
{\bf Constructing the homeomorphisms:}
Now we define two functions as follows
\begin{equation}
H(t,x)=x+h(t,(t,x)),\label{(3.5)}
\end{equation}
\begin{equation}
G(t,y)=y+g(t,(t,y)),\label{(3.6)}
\end{equation}
for $g$ and $h$ as in Lemmas \ref{lemma_3.2} and \ref{lemma_3.3}.

By differentiation and similar arguments, we have the following lemmas.

\begin{lemma}\label{lemma_3.5}
For any fixed $(t_0,x)$ , $H(t,X(t,t_0,x))$ is a solution of system \eqref{(2.6)}.
\end{lemma}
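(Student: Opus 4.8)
The plan is to exploit the uniqueness of the bounded solution provided by Lemma~\ref{lemma_3.2} together with the flow (group) property of the nonlinear system~\eqref{nueva_1}. Fix $(t_0,x)$ and write $\phi(t):=X(t,t_0,x)$, a solution of~\eqref{nueva_1}. By~\eqref{(3.5)},
\[
 H(t,\phi(t)) = \phi(t) + h\bigl(t,(t,\phi(t))\bigr),
\]
so everything hinges on the map $t\mapsto h\bigl(t,(t,\phi(t))\bigr)$, in which the running variable $t$ appears in \emph{both} arguments of $h$.

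First I would note that, by uniqueness of solutions of the initial value problem for~\eqref{nueva_1}, $X\bigl(s,\tau,\phi(\tau)\bigr)=X(s,t_0,x)=\phi(s)$ for all $s,\tau\in\mathbb{R}$: the orbit through any base point $(\tau,\phi(\tau))$ is the orbit through $(t_0,x)$. Hence the linear inhomogeneous system~\eqref{(3.2)} with data $(\tau,\phi(\tau))$ is
\[
 Z' = A(t)Z - f\bigl(t,X(t,\tau,\phi(\tau))\bigr) = A(t)Z - f\bigl(t,\phi(t)\bigr),
\]
which is the \emph{same} system for every $\tau$. Since its homogeneous part $Z'=A(t)Z$ has an exponential dichotomy, this system has a unique bounded solution (Lemma~\ref{lemma_3.2}); therefore $h\bigl(t,(\tau,\phi(\tau))\bigr)$ is independent of $\tau$, and in particular
\[
 h\bigl(t,(t,\phi(t))\bigr) = h\bigl(t,(t_0,\phi(t_0))\bigr) = h\bigl(t,(t_0,x)\bigr)\qquad\text{for all } t.
\]
Consequently $H(t,\phi(t)) = \phi(t) + h\bigl(t,(t_0,x)\bigr)$, where $h(\cdot,(t_0,x))$ is now a bona fide differentiable solution in $t$ of~\eqref{(3.2)} with fixed data $(t_0,x)$.

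The remainder is a one-line differentiation. Using that $\phi$ solves~\eqref{nueva_1} and $h(\cdot,(t_0,x))$ solves~\eqref{(3.2)},
\[
 \frac{d}{dt}H(t,\phi(t)) = \bigl[A(t)\phi(t)+f(t,\phi(t))\bigr] + \bigl[A(t)h(t,(t_0,x)) - f(t,\phi(t))\bigr] = A(t)\,H(t,\phi(t)),
\]
so $H(t,X(t,t_0,x))$ is a solution of the linear system~\eqref{(2.6)}, as asserted.

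The only step that is not mechanical is the middle one: recognizing that the two-variable object $h\bigl(t,(t,\phi(t))\bigr)$ collapses, along a fixed orbit, to the one-variable bounded solution $h\bigl(t,(t_0,x)\bigr)$. This is precisely where the exponential dichotomy (hence uniqueness of the bounded solution) does the work — without it, moving the base point $\tau$ inside $h$ would not be legitimate, and differentiating ``$t$ in both slots'' would force a chain-rule computation of $\partial_\tau h$ that this argument is designed to sidestep.
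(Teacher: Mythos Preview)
Your argument is correct and is precisely the approach the paper has in mind: it merely states ``By differentiation and similar arguments'' before Lemma~\ref{lemma_3.5}, leaving implicit exactly the reduction you spell out --- that the flow property $X(s,\tau,\phi(\tau))=\phi(s)$ together with the uniqueness of the bounded solution in Lemma~\ref{lemma_3.2} forces $h(t,(t,\phi(t)))=h(t,(t_0,x))$, after which the differentiation is immediate. Your write-up is in fact more careful than the paper's, since you make explicit why one may differentiate in $t$ without a chain-rule term in the second slot of $h$.
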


\begin{lemma}\label{lemma_3.6}
For any fixed $(t_0,y)$ , $G(t,Y(t,t_0,y))$ is a solution of  system \eqref{(2.7)}.
\end{lemma}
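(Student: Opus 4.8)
The plan is to mimic the (omitted) argument behind Lemma \ref{lemma_3.5}, exploiting the cocycle property of the linear flow together with the uniqueness of the bounded solution in Lemma \ref{lemma_3.3}. Fix $(t_0,y)$ and write $y(t):=Y(t,t_0,y)$ for the solution of the linear system \eqref{(2.6)} through $(t_0,y)$. The first step is to show that, along this trajectory, the ``diagonal'' evaluation of $g$ does not actually vary in both of its arguments, namely
\[
 g\bigl(t,(t,y(t))\bigr)=g\bigl(t,(t_0,y)\bigr)\qquad\text{for every }t.
\]
Indeed, by the group property of the linear flow, $Y(s,t,y(t))=Y\bigl(s,t,Y(t,t_0,y)\bigr)=Y(s,t_0,y)$ for all $s$, so the nonautonomous equation \eqref{(3.3)} associated with the parameter $(t,y(t))$ coincides term by term with the one associated with the parameter $(t_0,y)$; since \eqref{(3.3)} has a unique bounded solution (Lemma \ref{lemma_3.3}), those two solutions agree, and evaluating at time $t$ yields the displayed identity. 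Consequently, recalling $G(t,x)=x+g(t,(t,x))$ from \eqref{(3.6)},
\[
 G\bigl(t,Y(t,t_0,y)\bigr)=Y(t,t_0,y)+g\bigl(t,(t,y(t))\bigr)=y(t)+g\bigl(t,(t_0,y)\bigr).
\]

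The second step is a plain differentiation. With $(t_0,y)$ fixed, $t\mapsto g(t,(t_0,y))$ is, by Lemma \ref{lemma_3.3}, the bounded solution of \eqref{(3.3)} with $(\tau,\xi)=(t_0,y)$, hence it is $C^1$ and satisfies
\[
 \frac{d}{dt}\,g\bigl(t,(t_0,y)\bigr)=A(t)\,g\bigl(t,(t_0,y)\bigr)+f\bigl(t,\,y(t)+g(t,(t_0,y))\bigr),
\]
while $y'(t)=A(t)y(t)$. Setting $w(t):=y(t)+g(t,(t_0,y))=G\bigl(t,Y(t,t_0,y)\bigr)$ and adding the two identities gives $w'(t)=A(t)w(t)+f(t,w(t))$, i.e.\ $w$ solves the nonlinear system \eqref{nueva_1}, which is precisely the assertion.

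I expect the only delicate point to be the first step, the identity $g(t,(t,y(t)))=g(t,(t_0,y))$: a priori $G(t,\cdot)$ is built from $g$ evaluated at the point $(t,y(t))$ whose \emph{both} coordinates move with $t$, so one cannot differentiate $G(t,Y(t,t_0,y))$ directly. The resolution is structural rather than computational: $g(t,(\tau,\xi))$ depends on $(\tau,\xi)$ only through the orbit $Y(\cdot,\tau,\xi)$, and the cocycle identity of the linear flow reparametrizes that orbit without changing it, so uniqueness of the bounded solution forces the equality. Everything else — the $C^1$-regularity needed to differentiate, and the fact that the sum of the linear trajectory and the bounded solution of \eqref{(3.3)} solves \eqref{nueva_1} — is routine and entirely parallel to the treatment of $h$ and $H$ in Lemma \ref{lemma_3.5}.
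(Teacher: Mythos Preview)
Your proof is correct and is precisely the argument the paper intends: the paper does not write out a proof of Lemma \ref{lemma_3.6} but merely says ``by differentiation and similar arguments,'' and your two steps---reducing $g(t,(t,y(t)))$ to $g(t,(t_0,y))$ via the cocycle identity of $Y$ and the uniqueness in Lemma \ref{lemma_3.3}, then differentiating the sum---are exactly the natural elaboration of that hint. (Note that the reference to ``system \eqref{(2.7)}'' in the lemma statement is a typo in the paper; the intended target is the nonlinear system \eqref{nueva_1}, as you correctly inferred.)
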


\begin{lemma}\label{lemma_3.7}
For any $t\in\mathbb{R},y\in\mathbb{R}^{n}, H(t,G(t,y))=y$.
\end{lemma}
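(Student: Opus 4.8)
The plan is to use the uniqueness of bounded solutions established in Lemma~\ref{lemma_3.4}. Fix $(t_0,y)\in\mathbb{R}\times\mathbb{R}^n$ and let $Y(t)=Y(t,t_0,y)$ be the solution of the linear system \eqref{(2.6)} through $y$ at time $t_0$. By Lemma~\ref{lemma_3.6}, $G(t,Y(t))$ is a solution of the nonlinear system \eqref{nueva_1}; call it $X(t):=G(t,Y(t))$, so that $X(t)=Y(t)+g(t,(t,Y(t)))$. Now I want to evaluate $H$ along this solution. The key is to observe that along a genuine solution $X(t)$ of \eqref{nueva_1}, the function $t\mapsto H(t,X(t))$ should itself be a solution of the linear system \eqref{(2.6)}; this is precisely the content of Lemma~\ref{lemma_3.5} applied with $X(t)=X(t,t_0,X(t_0))$. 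Writing $H(t,X(t))=X(t)+h(t,(t,X(t)))$, both $H(t,X(t))$ and $Y(t)$ are solutions of the same linear system \eqref{(2.6)}, and their difference is
\[
H(t,X(t))-Y(t)=X(t)-Y(t)+h(t,(t,X(t)))=g(t,(t,Y(t)))+h(t,(t,X(t))),
\]
which is bounded in $t$ because both $g$ and $h$ are bounded by $K\|\mathcal{L}_\alpha(\mu)\|_\infty$ (Lemmas~\ref{lemma_3.2} and~\ref{lemma_3.3}).

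Next I would invoke the uniqueness of bounded solutions of the linear system under an exponential dichotomy: if $w(t)=H(t,X(t))-Y(t)$ solves $w'=A(t)w$ and is bounded on $\mathbb{R}$, then $w\equiv 0$. (This is the homogeneous specialization of the uniqueness statements used repeatedly in Lemmas~\ref{lemma_3.2}--\ref{lemma_3.4}: a bounded solution of $x'=A(t)x$ under \eqref{(2.3)} must vanish identically, since projecting onto the stable and unstable subspaces and letting $t\to\pm\infty$ forces both components to zero.) Hence $H(t,X(t))=Y(t)$ for all $t$. Evaluating at $t=t_0$ gives $H(t_0,X(t_0))=Y(t_0)=y$, i.e. $H(t_0,G(t_0,y))=y$. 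Since $(t_0,y)$ was arbitrary, this proves the claim.

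The one point that needs care — and the main obstacle — is justifying that $t\mapsto H(t,X(t))$ is a solution of the linear system when $X(t)=G(t,Y(t))$ is only known a priori to be \emph{a} solution of \eqref{nueva_1} rather than one written in the normalized form $X(t,t_0,\cdot)$; but this is immediate since any solution of \eqref{nueva_1} equals $X(t,t_0,X(t_0))$ by uniqueness of initial value problems, so Lemma~\ref{lemma_3.5} applies verbatim. The remaining verification — that $w(t)=g(t,(t,Y(t)))+h(t,(t,X(t)))$ is continuous and bounded so that the dichotomy uniqueness argument is legitimate — is routine from the explicit uniform bounds already recorded. I should also note in passing that the symmetric identity $G(t,H(t,x))=x$ follows by the same argument with the roles of the two systems interchanged (using Lemmas~\ref{lemma_3.2} and~\ref{lemma_3.5} in place of Lemmas~\ref{lemma_3.3} and~\ref{lemma_3.6}), which together establish that $H(t,\cdot)$ and $G(t,\cdot)$ are mutually inverse bijections of $\mathbb{R}^n$.
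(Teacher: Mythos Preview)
Your proof is correct and follows essentially the same route as the paper: take a solution $y(t)$ of the linear system, note that $H(t,G(t,y(t)))$ is again a solution of the linear system by Lemmas~\ref{lemma_3.5} and~\ref{lemma_3.6}, show the difference $H(t,G(t,y(t)))-y(t)$ is bounded via the uniform bounds on $g$ and $h$, and conclude it vanishes because an exponentially dichotomic linear system admits no nontrivial bounded solution. The only cosmetic difference is that the paper phrases the last step by computing $J'(t)=A(t)J(t)$ directly rather than citing a ``homogeneous specialization'' of the earlier lemmas, and treats the symmetric identity separately as Lemma~\ref{lemma_3.8}.
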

\begin{proof}
Let $y(t)$ be any solution of  linear system \eqref{(2.6)}. From Lemma \ref{lemma_3.6}, $G(t,y(t))$ is a solution of  system \eqref{nueva_1}.
Then by Lemma \ref{lemma_3.5}, we see that $H(t,G(t,y(t)))$ is a solution of  system \eqref{(2.6)}, written as $\overline{y}(t)$. Let
\begin{equation*}
 J(t)=\overline{y}(t)-y(t).
\end{equation*}
To prove this conclusion, we need to show that $J(t)\equiv 0$. In fact, differentiating $J$, we have
\[\begin{split}
 J'(t)=&\overline{y}'(t)-y'(t)\\
 =&A(t)\overline{y}(t)-A(t)y(t)\\
 =&A(t)J(t),
\end{split}\]
which implies that $J$ is a solution of the system $Z'=A(t)Z$. From Lemma \ref{lemma_3.2} and Lemma \ref{lemma_3.3}, it follows that
\[\begin{split}
 \|J(t)\|=&\|\overline{y}(t)-y(t)\|\\
 =&\|H(t,G(t,(t,y(t)))-y(t)\|\\
 \leq&\|H(t,G(t,(t,y(t)))-G(t,(t,y(t)))\|+\|G(t,(t,y(t)))-y(t)\|\\
 \leq& 2K\parallel \mathcal{L}_{\alpha}(\mu)\parallel _\infty.
\end{split}\]
This implies that $J(t)$ is a bounded solution of the system $Z'=A(t)Z$. However, the linear system $Z'=A(t)Z$ has no nontrivial bounded solution.
Hence $J(t)\equiv 0$, that is,
$\overline{y}(t)=y(t)$.\\
Thus, $J(t)\equiv 0$, that is,
\begin{equation*}
 \overline{y}(t)=y(t),\quad \text{or}\quad H(t,G(t,y(t)))\equiv y(t).
\end{equation*}
Since $y(t)$  is an arbitrary solution of linear system \eqref{(2.6)}, the proof of Lemma \ref{lemma_3.7} is complete.
\end{proof}

\begin{lemma}\label{lemma_3.8}
For any $t\in \mathbb{R}, x\in\mathbb{R}^{n}$, we have
$$G(t,H(t,x))=x.$$
\end{lemma}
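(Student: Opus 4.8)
The plan is to run the argument of Lemma~\ref{lemma_3.7} in the reverse direction. The point to keep in mind is that, in contrast with Lemma~\ref{lemma_3.7}, the curve we shall force to vanish is the difference of two solutions of the \emph{nonlinear} system \eqref{nueva_1}; this difference does not satisfy the homogeneous linear equation $Z'=A(t)Z$ but rather the variational system \eqref{(3.4)}, so the vanishing criterion supplied by ``$Z'=A(t)Z$ has no nontrivial bounded solution'' must be replaced by the uniqueness statement of Lemma~\ref{lemma_3.4}.

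First I would fix $(t_0,x)\in\mathbb{R}\times\mathbb{R}^{n}$ and let $x(t):=X(t,t_0,x)$ be the solution of \eqref{nueva_1} with $x(t_0)=x$. By Lemma~\ref{lemma_3.5}, $u(t):=H(t,x(t))$ is a solution of the linear system \eqref{(2.6)}; applying Lemma~\ref{lemma_3.6} to the linear solution $u(t)$, the curve $\overline{x}(t):=G(t,u(t))=G(t,H(t,x(t)))$ is again a solution of \eqref{nueva_1}. Set $J(t):=\overline{x}(t)-x(t)$. Since both $\overline{x}$ and $x$ solve \eqref{nueva_1}, differentiating gives
\[
 J'(t)=A(t)\overline{x}(t)+f(t,\overline{x}(t))-A(t)x(t)-f(t,x(t))=A(t)J(t)+f\bigl(t,x(t)+J(t)\bigr)-f\bigl(t,x(t)\bigr),
\]
so $J$ is a solution of the variational system \eqref{(3.4)} associated with the curve $x(t)$.

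Next I would check that $J$ is bounded. Using $H(t,\cdot)=\mathrm{id}+h(t,(t,\cdot))$, $G(t,\cdot)=\mathrm{id}+g(t,(t,\cdot))$ and the uniform bounds of Lemmas~\ref{lemma_3.2} and~\ref{lemma_3.3},
\[
 \|J(t)\|\le\bigl\|G(t,H(t,x(t)))-H(t,x(t))\bigr\|+\bigl\|H(t,x(t))-x(t)\bigr\|\le 2K\,\|\mathcal{L}_{\alpha}(\mu)\|_{\infty}
\]
for every $t$. Thus $J$ is a bounded solution of \eqref{(3.4)}, and Lemma~\ref{lemma_3.4} forces $J(t)\equiv 0$, i.e.\ $G(t,H(t,x(t)))\equiv x(t)$. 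Evaluating at $t=t_0$ yields $G(t_0,H(t_0,x))=x$, and since $(t_0,x)$ was arbitrary the identity follows. (One should note that the proof of Lemma~\ref{lemma_3.4}---a contraction argument for the Green operator with constant $K\theta<1$---goes through unchanged when the reference curve $x(t)$ is a solution of \eqref{nueva_1} rather than of the linear system, which is the case needed here.)

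The step I expect to be the main obstacle is conceptual rather than computational: realizing that $J$ must be tested against \eqref{(3.4)}, so that Lemma~\ref{lemma_3.4} (and not the dichotomy ``no bounded solution'' fact used in Lemma~\ref{lemma_3.7}) is the correct vanishing tool. Once this is seen, the two differentiations are exactly Lemmas~\ref{lemma_3.5}--\ref{lemma_3.6} and the boundedness estimate is the same triangle inequality already used in the proof of Lemma~\ref{lemma_3.7}.
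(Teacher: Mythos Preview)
Your proposal is correct and is exactly the argument the paper has in mind when it writes ``The proof is similar to that in Lemma~\ref{lemma_3.7}'': one composes Lemmas~\ref{lemma_3.5} and~\ref{lemma_3.6} in the opposite order, obtains a bounded $J$ satisfying \eqref{(3.4)}, and invokes Lemma~\ref{lemma_3.4}. Your observation that Lemma~\ref{lemma_3.4} is stated for $x(t)$ a solution of the \emph{linear} system \eqref{eq1} but is needed here for $x(t)$ a solution of \eqref{nueva_1} is well taken; as you note, the underlying contraction estimate only uses the Lipschitz bound \eqref{(2.5)} on $f$ and is insensitive to which equation the reference curve $x(t)$ solves, so no change is required.
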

\begin{proof}
The proof is similar to that in Lemma \ref{lemma_3.7}.
\end{proof}

{



\subsection{Key lemma for the Lipchitzian continuity of homeomorphism}
{   To introduce our key lemma, we begin with a result from \cite{[4],Meiss-book,ZWN-Chinese}. We restate it as follows.
\begin{lemma}\label{moti-lemma}
  Assume that system \eqref{eq1} admits an exponential dichotomy with the form \eqref{(2.3)} on $\mathbb{R}$.\\
(1) If system  \eqref{nueva_1} has a bounded  solution $X(t,t_0,x)$ on $[t_0,\infty)$ satisfying the initial value $X(t_0)=x$, then
  $X(t,t_0,x)$  can be expressed by:
\begin{equation}\label{PPP}
 X(t,t_0,x)=  U(t,t_0)P(t_0)x+\int_{t_0}^t \Phi_{P}(t,\tau)f(\tau,X(\tau,t_0,x))d\tau +\int_t^{\infty}\Phi_{Q}(t,\tau)f(\tau,X(\tau,t_0,x))d\tau,
\end{equation}
where
$$\Phi_{P}(t,\tau)=U(t,\tau)P(\tau),\quad t\geq \tau, \quad \text{ and } \quad \Phi_{Q}(t,\tau)=-U(t,\tau)(I-P(\tau)),\quad t\leq \tau. $$
Conversely, all solutions $X(t,t_0,x)$ of \eqref{PPP} on $[t_0,\infty)$ are the solutions of \eqref{nueva_1}.\\
(2) If system  \eqref{nueva_1} has a   bounded solution $X(t,t_0,x)$ on $(-\infty, t_0]$ satisfying the initial value $X(t_0)=x$,  then
$X(t,t_0,x)$ can be expressed by:
\begin{equation}\label{QQQ}
X(t,t_0,x)=  U(t,t_0)(I-P(t_0))x+\int_{-\infty}^t\Phi_{P}(t,\tau)f(\tau,X(\tau,t_0,x))d\tau +\int_t^{t_0}\Phi_{Q}(t,\tau)f(\tau,X(\tau,t_0,x))d\tau.
\end{equation}
Conversely, all solutions $X(t,t_0,x)$ of \eqref{QQQ}  on $(-\infty, t_0]$ are the solutions of \eqref{nueva_1}.
\end{lemma}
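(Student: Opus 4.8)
The plan is to view this as a sharpening of ordinary variation of constants: among all solutions of the linear inhomogeneous equation with forcing $f(\tau,X(\tau,t_0,x))$, the exponential dichotomy singles out exactly which one a bounded trajectory must be. Throughout I abbreviate $\phi(\tau):=f(\tau,X(\tau,t_0,x))$; under the standing hypotheses one has $\|\phi(\tau)\|\le\mu(\tau)$ with $\|\mathcal{L}_{\alpha}(\mu)\|_{\infty}<\infty$, so every improper integral below converges absolutely once paired with the dichotomy bounds \eqref{(2.3)}.

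For part~(1), I would first record the structural facts that do the real work: the dichotomy projection commutes with the evolution, $U(t,\tau)P(\tau)=P(t)U(t,\tau)$; consequently $\Phi_{P}(t,\tau)=U(t,\tau)P(\tau)$ (for $t\ge\tau$) and $\Phi_{Q}(t,\tau)=-U(t,\tau)(I-P(\tau))$ (for $t\le\tau$) both satisfy $\partial_{t}\Phi=A(t)\Phi$, and they obey the jump relation $\Phi_{P}(t,t)-\Phi_{Q}(t,t)=P(t)+(I-P(t))=I$. Then I split $X(t)=P(t)X(t)+(I-P(t))X(t)$. Plain variation of constants from $t_{0}$ gives the stable component at once, $P(t)X(t)=U(t,t_{0})P(t_{0})x+\int_{t_{0}}^{t}\Phi_{P}(t,\tau)\phi(\tau)\,d\tau$. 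For the unstable component, variation of constants from any $s>t$ together with the commutation identity gives $(I-P(t))X(t)=U(t,s)(I-P(s))X(s)+\int_{t}^{s}\Phi_{Q}(t,\tau)\phi(\tau)\,d\tau$; letting $s\to\infty$, the transported term obeys $\|U(t,s)(I-P(s))X(s)\|\le K e^{-\alpha(s-t)}\sup_{\tau\ge t_{0}}\|X(\tau)\|\to 0$ because $X$ is bounded on $[t_{0},\infty)$, while the integral converges to $\int_{t}^{\infty}\Phi_{Q}(t,\tau)\phi(\tau)\,d\tau$, so $(I-P(t))X(t)$ equals that integral. Adding the two components yields \eqref{PPP}.

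The converse direction of part~(1) is a direct differentiation of \eqref{PPP} in $t$: the two boundary contributions at $\tau=t$ combine through the jump relation into $[\Phi_{P}(t,t)-\Phi_{Q}(t,t)]\phi(t)=f(t,X(t))$, and the derivatives of the integrands reassemble $A(t)X(t)$, so $X$ solves \eqref{nueva_1}. Part~(2) is the mirror image: since $X$ is now bounded on $(-\infty,t_{0}]$, it is the \emph{stable} component that must be recovered as an integral extended to $-\infty$ — write $P(t)X(t)=U(t,s)P(s)X(s)+\int_{s}^{t}\Phi_{P}(t,\tau)\phi(\tau)\,d\tau$ for $s<t$ and let $s\to-\infty$, using $\|U(t,s)P(s)\|\le K e^{-\alpha(t-s)}$ — while the unstable component keeps its finite variation-of-constants form over $[t,t_{0}]$; this produces \eqref{QQQ}, and the converse again follows by differentiation.

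The only genuinely delicate step is this limiting argument: one must use \emph{simultaneously} the uniform boundedness of the trajectory on the relevant half-line (to kill the transported boundary term) and the integrability $\|\mathcal{L}_{\alpha}(\mu)\|_{\infty}<\infty$ together with the decaying half of \eqref{(2.3)} (to force the remaining improper integral to converge). Everything else is bookkeeping with the two halves of the dichotomy inequality, the commutation of $P$ with $U$, and the jump relation of the Green kernel; I anticipate no obstacle beyond keeping careful track of the domains $t\ge\tau$ versus $t\le\tau$ on which $\Phi_{P}$ and $\Phi_{Q}$ are defined.
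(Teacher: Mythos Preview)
Your argument is correct and is essentially the classical proof one finds in Coppel's monograph: project variation of constants onto the stable and unstable subspaces, then use boundedness of the trajectory together with the appropriate half of the dichotomy estimate to annihilate the transported boundary term as $s\to\pm\infty$. One small remark: you do not actually need the bound $\|\phi(\tau)\|\le\mu(\tau)$ with $\|\mathcal{L}_{\alpha}(\mu)\|_{\infty}<\infty$ to conclude that $\int_{t}^{s}\Phi_{Q}(t,\tau)\phi(\tau)\,d\tau$ converges as $s\to\infty$; since the identity $(I-P(t))X(t)=U(t,s)(I-P(s))X(s)+\int_{t}^{s}\Phi_{Q}(t,\tau)\phi(\tau)\,d\tau$ holds for every $s>t$, the left-hand side is independent of $s$ and the first term on the right tends to zero, so the integral is forced to converge to $(I-P(t))X(t)$ automatically. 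This makes the forward direction of the lemma depend only on the boundedness of $X$ and the dichotomy, not on the standing hypotheses on $f$.

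As for comparison with the paper: the paper does not supply a proof of this lemma at all. It is quoted as a known result from the references \cite{[4],Meiss-book,ZWN-Chinese} (Coppel, Meiss, Zhang) and restated without argument. Your write-up therefore fills in exactly what those references contain, and there is nothing to contrast methodologically.
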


{ The following lemma plays a great role in the proof of  Lipchitzian continuity of homeomorphism.
\begin{lemma}\label{lemma_3.9}
Denote $X(t,t_0,x)$ is the solution of system \eqref{nueva_1} satisfying  $X(t_0)=x\in \mathbb{R}^{n}$.\\
(i) For any $\xi_1 \in P(t_0)(\mathbb{R}^{n})$, 
\eqref{nueva_1} has a unique bounded solution $X(t,t_0,x)$ on $[t_0,\infty)$ satisfying $P(t_0)X(t_0)=\xi_1$, which is expressed by \eqref{PPP}; 
(ii) For any $\xi_2 \in (I-P(t_0))(\mathbb{R}^{n})$,  \eqref{nueva_1} has a unique bounded solution $X(t,t_0,x)$ on $(-\infty, t_0]$ satisfying $(I-P(t_0))X(t_0)=\xi_2$, which is expressed by \eqref{QQQ}.\\
Moreover, if $\alpha=\alpha_1+\alpha_2$ and
$\sup_{t\in \mathbb{R}} \mathcal{L}_{\alpha_1}(r)(t)=\tilde{\theta}<K^{-1}, $
 then for any $\alpha_2<\alpha$ the following conclusions hold:\\
(1) For $P(t_0)(x-\bar{x}) \in P(t_0)(\mathbb{R}^{n})$, we have
\begin{equation}\label{21}
\quad \|X(t,t_0,x)-X(t,t_0,\overline{x})\|\leq \dfrac{K}{1-K\tilde{\theta}} \|P(t_0)(x-\overline{x})\|\exp\{-\alpha_2(t-t_0)\}, \quad t\geq t_0;
\end{equation}
(2) For $(I-P(t_0))(x-\bar{x})\in (I-P(t_0))(\mathbb{R}^{n})$, we have
\begin{equation}\label{21-2}
\|X(t,t_0,x)-X(t,t_0,\overline{x})\|\leq \dfrac{K}{1-K\tilde{\theta}} \|(I-P(t_0))(x-\overline{x})\|\exp\{\alpha_2(t-t_0)\}, \quad t\leq t_0.
\end{equation}
\end{lemma}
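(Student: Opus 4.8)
The plan is to establish the two assertions separately: first the existence and uniqueness of the bounded half-line solutions by a contraction argument built on Lemma \ref{moti-lemma}, then the two exponential estimates \eqref{21} and \eqref{21-2} by invoking the dichotomic inequalities of Lemmas \ref{lemma_3.1.1} and \ref{lemma_3.1.2}.

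For part (i), fix $\xi_1\in P(t_0)(\mathbb{R}^n)$ and work on the Banach space of bounded continuous maps $[t_0,\infty)\to\mathbb{R}^n$ with the supremum norm. Define
\[
 (\mathcal{F}Z)(t)=U(t,t_0)\xi_1+\int_{t_0}^t\Phi_{P}(t,\tau)f(\tau,Z(\tau))\,d\tau+\int_t^{\infty}\Phi_{Q}(t,\tau)f(\tau,Z(\tau))\,d\tau .
\]
Using the dichotomy \eqref{(2.3)} and $\|f(t,z)\|\le\mu(t)$ one checks that $\mathcal{F}$ maps the closed ball of radius $K\|\xi_1\|+K\|\mathcal{L}_{\alpha}(\mu)\|_\infty$ into itself; using $\|f(t,z)-f(t,\bar z)\|\le r(t)\|z-\bar z\|$ together with \eqref{(2.3)} one obtains $\|\mathcal{F}Z-\mathcal{F}\bar Z\|_\infty\le K\theta\,\|Z-\bar Z\|_\infty$, and $K\theta<1$ by \eqref{c1_c2}. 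Hence $\mathcal{F}$ has a unique fixed point $X(\cdot,t_0,x)$, which by the converse part of Lemma \ref{moti-lemma} is a bounded solution of \eqref{nueva_1} on $[t_0,\infty)$; evaluating at $t=t_0$ (the first integral vanishes and the second lies in the range of $I-P(t_0)$) gives $P(t_0)X(t_0)=\xi_1$. Conversely, every bounded solution of \eqref{nueva_1} on $[t_0,\infty)$ satisfies \eqref{PPP} by Lemma \ref{moti-lemma}, hence is a fixed point of $\mathcal{F}$ with $\xi_1=P(t_0)X(t_0)$, so uniqueness of the fixed point gives uniqueness of the bounded solution with prescribed $P(t_0)$-component. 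Part (ii) follows in the same way, replacing \eqref{PPP} by \eqref{QQQ} and working on bounded continuous maps $(-\infty,t_0]\to\mathbb{R}^n$.

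For estimate (1), both $X(t,t_0,x)$ and $X(t,t_0,\bar x)$ are bounded on $[t_0,\infty)$ and therefore satisfy \eqref{PPP}. Subtracting the two identities, the linear terms contribute $U(t,t_0)P(t_0)(x-\bar x)$, and \eqref{(2.3)}--\eqref{(2.5)} yield, for $u(t):=\|X(t,t_0,x)-X(t,t_0,\bar x)\|$,
\[
 u(t)\le K\|P(t_0)(x-\bar x)\|\,e^{-\alpha(t-t_0)}+K\!\int_{t_0}^t\! e^{-\alpha(t-\tau)}r(\tau)u(\tau)\,d\tau+K\!\int_t^{\infty}\! e^{-\alpha(\tau-t)}r(\tau)u(\tau)\,d\tau .
\]
This is exactly inequality \eqref{dic1} with $c=K\|P(t_0)(x-\bar x)\|$, $c_1=c_2=K$, $b=r$ and $s=\infty$, and $u$ is bounded because both solutions are. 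Writing $\alpha=\alpha_1+\alpha_2$, the constant $\theta_1$ of \eqref{condicion_theta} obeys $\theta_1\le K\sup_{t\in\mathbb{R}}\mathcal{L}_{\alpha_1}(r)(t)=K\tilde\theta<1$, so Lemma \ref{lemma_3.1.1} applies and gives $u(t)\le\frac{K\|P(t_0)(x-\bar x)\|}{1-K\tilde\theta}\,e^{-\alpha_2(t-t_0)}$, i.e.\ \eqref{21}. Estimate (2) is obtained verbatim from \eqref{QQQ}: the difference satisfies \eqref{dic2} with $c=K\|(I-P(t_0))(x-\bar x)\|$, $c_1=c_2=K$, lower limit $-\infty$ and upper limit $s=t_0$, and Lemma \ref{lemma_3.1.2} produces $u(t)\le\frac{K\|(I-P(t_0))(x-\bar x)\|}{1-K\tilde\theta}\,e^{-\alpha_2(t_0-t)}=\frac{K\|(I-P(t_0))(x-\bar x)\|}{1-K\tilde\theta}\,e^{\alpha_2(t-t_0)}$ for $t\le t_0$, which is \eqref{21-2}.

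The bulk of the work — the mapping and contraction estimates for $\mathcal{F}$ and the term-by-term norm bounds after subtraction — is routine once the dichotomic inequalities are available. The point that must be handled with care, and where the improvement over a Gronwall/Bellman argument originates, is to recognize that the subtracted variation-of-constants identity has precisely the shape \eqref{dic1} (resp.\ \eqref{dic2}), so that Lemmas \ref{lemma_3.1.1}--\ref{lemma_3.1.2} return an honest \emph{decay} factor $e^{-\alpha_2(t-t_0)}$; this dictates the splitting $\alpha=\alpha_1+\alpha_2$ with all the smallness loaded onto $\alpha_1$ (hence the hypothesis $\sup_t\mathcal{L}_{\alpha_1}(r)(t)=\tilde\theta<K^{-1}$, which is stronger than the $\theta<K^{-1}$ used for existence since $\mathcal{L}_{\alpha_1}(r)\ge\mathcal{L}_{\alpha}(r)$) while leaving $\alpha_2<\alpha$ free as the exponential rate. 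A small bookkeeping caveat: the symbol $\mathcal{L}_{\alpha_1}$ in the hypothesis of the present lemma is the one of \eqref{b-eq}, whereas the one appearing in \eqref{condicion_theta} carries the constants $c_1,c_2$; with $c_1=c_2=K$ the two differ by exactly the factor $K$, which is why $\theta_1=K\tilde\theta$.
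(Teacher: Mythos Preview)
Your proof is correct and follows essentially the same approach as the paper: a Banach contraction argument on the half-line with the operator built from \eqref{PPP}/\eqref{QQQ} to obtain existence and uniqueness, followed by subtracting the two variation-of-constants identities and applying the dichotomic inequalities (Lemmas \ref{lemma_3.1.1} and \ref{lemma_3.1.2}) to extract the exponential decay with rate $\alpha_2$. Your additional remarks on checking $P(t_0)X(t_0)=\xi_1$ at $t=t_0$, on the uniqueness direction via Lemma \ref{moti-lemma}, and on the factor $K$ relating the two uses of $\mathcal{L}_{\alpha_1}$ are welcome clarifications that the paper leaves implicit.
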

}
\begin{proof}
  We claim the first part by means of Banach contraction mapping principle.
  Let $\mathcal{BC}$ be the set of all bounded continuous functions defined for $t\geq t_0$.
  If $\mathcal{J}$ is the mapping defined by
\[
\mathcal{J}X(t,t_0,x)=U  (t,t_0)P(t_0)x+\int_{t_0}^t\Phi_{P}(t,\tau)f(\tau,X(\tau,t_0,x))d\tau + \int_{t}^{\infty}\Phi_{Q}(t,\tau)f(\tau,X(\tau,t_0,x))d\tau.
\]
By using \eqref{(2.5)} and \eqref{c1_c2}, it is easy to see that $\mathcal{J}X$ is continuous and bounded. In fact,
\[\begin{split}
  \|\mathcal{J}X(t,t_0,x)\| \leq& K \exp\{-\alpha(t-t_0)\} \|\xi_1\|+\int_{t_0}^{\infty} K \exp\{-\alpha|t-\tau|\} \mu(\tau) d\tau \\
        \leq & K\|\xi_1\|+K\sup_{\tau\geq t_0}\mathcal{L}_{\alpha}(\mu)(\tau)<\infty,
\end{split}\]
where $\xi_1=P(t_0)X(t_0)=P(t_0)x$.
Hence $\mathcal{J}$ maps $\mathcal{BC}$ into itself.
 Note that $K\theta<1$ (see \eqref{c1_c2}), for any $X_1 (t,t_0,x), X_2 (t,t_0,x)\in \mathcal{BC}$, we have that
(also using \eqref{(2.5)} and \eqref{c1_c2})
\[\begin{split}
 \|\mathcal{J}X_1 (t,t_0,x)-\mathcal{J}X_2 (t,t_0,x)\|\leq& \int_{t_0}^{\infty} K\exp\{-\alpha|t-\tau|\} r(\tau)
    \| X_1 (\tau,t_0,x)-X_2 (\tau,t_0,x)\|d\tau \\
     \leq& K\theta \sup\limits_{\tau\geq t_0} \| X_1 (\tau,t_0,x)-X_2 (\tau,t_0,x)\|,
\end{split}\]
which implies that $\mathcal{J}$ is a contraction mapping in $\mathcal{BC}$, that is, there is a unique fixed point $X^*=\mathcal{J}X^*$
such that $X^*$ is bounded for $t\geq t_0$ and $P(t_0)X(t_0)=\xi_1$. The expression follows from Lemma \ref{moti-lemma} immediately.

Similar to the above procedure for $t\leq t_0$, by replacing the mapping  by
\[
\mathcal{J}X(t,t_0,x)=U (t,t_0)\xi_2+\int_t^{t_0}\Phi_{Q}(t,\tau)f(\tau,X(\tau,t_0,x))d\tau + \int_{-\infty}^{t}\Phi_{P}(t,\tau)f(\tau,X(\tau,t_0,x))d\tau,
\]
where $\xi_2 =(I-P(t_0))x\in (I-P(t_0))(\mathbb{R}^{n})$.  We can show that $X(t)$ is solution of \eqref{nueva_1} with the required property (ii).

 We now show the second part by means of Lemma \ref{lemma_3.1.1} and Lemma \ref{lemma_3.1.2}.
By \eqref{(2.5)},   we conclude that for any initial condition on $P(t_0)(x-\bar{x}) \in P(t_0)(\mathbb{R}^{n})$
\begin{equation}\label{22}
\begin{split}
 \|X(t,t_0,x)-X(t,t_0,\overline{x})\|\leq & K\|P(t_0)(x-\overline{x})\|\exp\{-\alpha(t-t_0)\} \\
 &+\int_{t_0}^{\infty}K\exp\{-\alpha|t-\tau|\}r(\tau)\|X(\tau,t_0,x)-X(\tau,t_0,\overline{x})\|d\tau.
\end{split}
 \end{equation}
Since $K\tilde{\theta}<1$,  and using dichotomic inequality in Lemma \ref{lemma_3.1.1}, we obtain that for $t\geq t_0$
\begin{eqnarray*}
 &\|X(t,t_0,x)-X(t,t_0,\overline{x})\|\leq & \dfrac{K}{1-K\tilde{\theta}}\exp\{-\alpha_2(t-t_0)\}\|P(t_0)(x-\overline{x})\|.
\end{eqnarray*}
Finally, analogous analysis for $t\leq t_0$, dichotomic inequality in Lemma \ref{lemma_3.1.2} ends the proof.
\end{proof}
\subsection{An intuitive example to understand Lemma \ref{lemma_3.9}.}
\begin{Exam1}\label{example_2.0}
In this example, we give an intuitive example to understand Lemma \ref{lemma_3.9}. It is a key lemma to prove that the homeomorphism $H$ is Lipschitz continuous.
\end{Exam1}

For simplicity, we consider the following planar system
\begin{equation}\label{planar-eq}
  x'=Ax+f(x), \quad x\in \mathbb{R}^{2},
\end{equation}
where $f:\mathbb{R}^{2}\rightarrow \mathbb{R}^{2}$ and $A=\mathrm{diag} \{-1,1\}$.
If we take $P=\mathrm{diag} \{1,0\}$ and $I-P=\mathrm{diag} \{0,1\}$, then 
\begin{equation}\label{planar-eq1}
 Px'= \left(
    \begin{array}{c}
      x'_1 \\
      0 \\
    \end{array}
  \right)=\left(
            \begin{array}{cc}
              -1 & 0 \\
              0 & 0\\
            \end{array}
          \right)
  \left(
    \begin{array}{c}
      x_1 \\
      0 \\
    \end{array}
  \right)+
  \left(
    \begin{array}{c}
      f_1(x_1,x_2) \\
      0 \\
    \end{array}
  \right)
\end{equation}
and
\begin{equation}\label{planar-eq2}
 (I-P)x'= \left(
    \begin{array}{c}
     0 \\
       x'_2 \\
    \end{array}
  \right)=\left(
            \begin{array}{cc}
              0 & 0 \\
              0 & 1\\
            \end{array}
          \right)
  \left(
    \begin{array}{c}
      0 \\
       x_2 \\
    \end{array}
  \right)+
  \left(
    \begin{array}{c}
     0  \\
       f_2(x_1,x_2) \\
    \end{array}
  \right).
\end{equation}
Suppose that $f=(f_1,f_2)^T$  
satisfies $f(0)=0$ and $\|f(x)\|\leq \mu$, $\|f(x)-f(\bar{x})\| \leq r\|x-\bar{x}\|$ for all $x, \bar{x}\in\mathbb{R}^{2}$,
where $\mu>0$ and $r<\frac{1}{2}$.
Then we have $K=1, \alpha=1, \theta=2rK/\alpha=2r<1$. Thus for $t\geq 0$,
it is clear that equation  \eqref{planar-eq1} has a unique bounded solution $X(t)$ with the initial condition $(x_1(0),0)^{T}$
(i.e., $P(x_1 (0),x_2 (0))^{T}=(x_1(0),0)^{T}\in P(\mathbb{R}^{2})$). Note
\[\begin{split}
X(t)
 =&
 e^{-t}\left(
         \begin{array}{cc}
           1 & 0 \\
           0 & 0\\
         \end{array}
       \right)\left(
    \begin{array}{c}
      x_1(0) \\
      0 \\
    \end{array}
  \right)
 +\int_0^{t} e^{-(t-s)}\left(
         \begin{array}{cc}
           1 & 0 \\
           0 & 0\\
         \end{array}
       \right)
       \left(
         \begin{array}{c}
           f_1(x_1(s),x_2(s)) \\
           0 \\
         \end{array}
       \right)ds  \\
  &- \int_{t}^{\infty} e^{t-s} \left(
         \begin{array}{cc}
           0 & 0 \\
           0 & 1\\
         \end{array}
       \right)
       \left(
         \begin{array}{c}
           0\\
            f_2(x_1(s),x_2(s)) \\
         \end{array}
       \right)ds.
\end{split}\]
Then for $t\geq 0$ and  $x,\bar{x}$,
\[\|X(t,0,x)-X(t,0,\bar{x})\|\leq c_1 e^{-\epsilon t}\|Px-P\bar{x}\|,\]
for some constants $c_1>0$, $0<\epsilon<1$.
For $t\leq 0$, similar to the procedure just shown, we have that  $X(t)$ is bounded and
\[\|X(t,0,x)-X(t,0,\bar{x})\|\leq c_2 e^{\epsilon t}\|(I-P)x-(I-P)\bar{x}\|,\]
for some  constants $c_2>0$, $0<\epsilon<1$.
Therefore, Lemma \ref{lemma_3.9} always holds.

}
}
\section{Proofs of main results}

\subsection{Proofs of Theorem \ref{theorem_2.1}}
Now we are in a position to prove  Theorem \ref{theorem_2.1}.
\begin{proof}[Proof of Theorem \ref{theorem_2.1}]
Now we show that $H(t,\cdot)$ satisfies the four conditions of Definition \ref{definition_2.1}.\\
For any fixed $t$, it follows from Lemma \ref{lemma_3.7}, \ref{lemma_3.8} that $H(t,\cdot)$ is homeomorphism and $G(t,\cdot)=H^{-1}(t,\cdot)$. Thus, Condition $(i)$ is satisfied.
From \eqref{(2.5)} and Lemma \ref{lemma_3.2}, we derive $\|H(t,x)-x\|=\|h(t,(t,x))\|\leq K\parallel \mathcal{L}_{\alpha}(\mu)\parallel _\infty$.
Note $\|H(t,x)\|\to\infty$  as $\|x\|\to\infty$, uniformly with respect to $t$. Thus, Condition $(ii)$ is satisfied.
From \eqref{(2.6)} and Lemma \ref{lemma_3.3}, we derive $\|G(t,y)-y\|=\|g(t,(t,y))\|$.
Note $\|G(t,y)\|\to\infty$  as $\|y\|\to\infty$, uniformly with respect to $t$. Thus, Condition $(iii)$ is satisfied.
From Lemma \ref{lemma_3.5}, Lemma \ref{lemma_3.6}, we know that Condition $(iv)$ is true.
Hence, the system \eqref{nueva_1} and its linear system \eqref{(2.6)} are topologically conjugated.
This completes the proof of  Theorem \ref{theorem_2.1}.
\end{proof}

\subsection{Proofs of Theorem \ref{theorem_2.2}}
Now we are in a position to prove Theorem \ref{theorem_2.2}. We divide the proof into two steps.
\begin{proof}
\textbf{Proof (1).} \textbf{Step 1-1}
We are going to use dichotomy inequality to prove the Lipshcitz continuity of the equivalent function $H$.
We claim that
$$\|H(t,x)-H(t,\overline{x})\|\leq p\|x-\overline{x}\|, \qquad p=1+\frac{2K^2\tilde{\theta}}{1-K\tilde{\theta}}.$$
By uniqueness $X(t,(\tau,\xi))=X(t,(t,X(t,(\tau,\xi)))$.
From Lemma \ref{lemma_3.2}, it follows that
\begin{eqnarray*}
 &h(t,(t,\xi))=&-\int_{-\infty}^t U(t,s)P(s)f(s,X(s,t,\xi))ds\\
 &&+\int_{t}^{\infty}U(t,s)(I-P(s))f(s,X(s,t,\xi))ds,
\end{eqnarray*}
which is also equivalent to
\[\begin{split}
 P(t)h(t,(t,\xi))=& -\int_{-\infty}^t U(t,s)P(s)f(s,X(s,t,\xi))ds, \\
 (I-P(t))h(t,(t,\xi))=& \int_{t}^{\infty}U(t,s)(I-P(s))f(s,X(s,t,\xi))ds.
\end{split}\]
Thus we get
\[\begin{split}
I_1:=& P(t)h(t,(t,\xi))-P(t)h(t,(t,\overline{\xi})) \\
=& \int_{-\infty}^t U(t,s)P(s)(f(s,X(s,t,\overline{\xi}))-f(s,X(s,t,\xi)))ds,\\
I_2:=& (I-P(t))h(t,(t,\xi))-(I-P(t))h(t,(t,\overline{\xi})) \\
=& \int_{t}^{\infty}U(t,s)(I-P(s))(f(s,X(s,t,\xi))-f(s,X(s,t,\overline{\xi})))ds.
\end{split}\]
In view of Lemma \ref{lemma_3.9}, 
for any initial condition on $P(t_0)(\mathbb{R}^n)$ (or $(I-P(t_0))(\mathbb{R}^n)$) of system \eqref{nueva_1} is bounded on demiaxes $[s,\infty)$ (or $(-\infty,s]$). 
Then, by  condition \eqref{(2.5)}, and using Lemma \ref{lemma_3.9} (part $t\leq t_0$), we deduce that
\begin{eqnarray*}
 &\|I_1\|\leq & \int_{-\infty}^t K\exp\{-\alpha(t-s)\}\dfrac{K}{1-K\tilde{\theta}}\exp\{\alpha_2(t-s)\}\parallel (I-P(t))(\xi-\overline{\xi})\parallel r(s)ds\\
 &&\leq \dfrac{K^2}{1-K\tilde{\theta}}\parallel (I-P(t))(\xi-\overline{\xi})\parallel \int_{-\infty}^t \exp\{-\alpha_1(t-s)\}r(s)ds,
\end{eqnarray*}
and similarly, by Lemma \ref{lemma_3.9} (part $t\geq t_0$), we have that
\begin{eqnarray*}
 &\|I_2\|\leq & \int_{t}^{\infty} K\exp\{\alpha(t-s)\}\dfrac{K}{1-K\tilde{\theta}}\exp\{-\alpha_2(t-s)\}\parallel P(t)( \xi-\overline{\xi})\parallel r(s)ds\\
 &&\leq \dfrac{K^2}{1-K\tilde{\theta}}\parallel P(t)( \xi-\overline{\xi})\parallel \int_{t}^{\infty} \exp\{\alpha_1(t-s)\}r(s)ds.
\end{eqnarray*}
Hence, we conclude that
\[\begin{split}
 \|I_1\|+\|I_2\|\leq & \dfrac{K^2}{1-K\tilde{\theta}}(\Vert P(t)( \xi-\overline{\xi})\Vert+\Vert (I-P(t))( \xi-\overline{\xi})\Vert) \\
 &\left(\int_{-\infty}^t \exp\{-\alpha_1(t-s)\}r(s)ds+ \int_{t}^{\infty} \exp\{\alpha_1(t-s)\}r(s)ds\right).
\end{split}\]
From \eqref{c1_c2} and the above inequality, it follows that
\[\begin{split}
&\|(P(t)+I-P(t)) (h(t,(t,\xi))-h(t,(t,\overline{\xi})))\| \\
\leq& \|I_1\|+\|I_2\|
\leq \left(\dfrac{K^2 \tilde{\theta}}{1-K\tilde{\theta}}\right)
(\Vert P(t)( \xi-\overline{\xi})\Vert+\Vert (I-P(t))( \xi-\overline{\xi})\Vert).
\end{split}\]
%
By the definition of $H(t,x)$,
\[
\begin{split}
 \|H(t,x)-H(t,\overline{x})\|\leq &\|x-\overline{x}\|+\dfrac{K^2 \tilde{\theta}}{1-K\tilde{\theta}}(\|P(t)(x-\overline{x})\|+\|(I-P(t))(x-\overline{x})\|)\\
 \leq& \left(1+\dfrac{2K^2 \tilde{\theta}}{1-K\tilde{\theta}}\right)\|x-\overline{x}\|\\
 \equiv& p\|x-\overline{x}\|.
\end{split}
\]
This completes the proof of Step 1-1.\\
 \textbf{Step 1-2} We show that there exist positive constants $q>0$ and $0<\beta<1$ such that for all $t, y, \bar{y}$
 \[ \|G(t,y)-G(t,\bar{y})\| \leq q\|y-\bar{y}\|^{\beta}.\]
 Usually this point is treated with successive approximations.
 From Lemma \ref{lemma_3.3}, we know that $g(t,(\tau,\xi))$ is a fixed point of the following map $\mathcal{T}$
\begin{equation}\label{g-expression}
\begin{split}
 (\mathcal{T}Z)(t)=&\int_{-\infty}^t U(t,s)P(s)f(s,Y(s,\tau,\xi)+Z(s))ds\\
 &-\int_{t}^{\infty}U(t,s)(I-P(s))f(s,Y(s,\tau,\xi)+Z(s))ds.
 \end{split}
\end{equation}
Let $g_{0}(t,(\tau,\xi))\equiv 0$, and by recursion define
\[\begin{split}
 g_{m+1}(t,(\tau,\xi))=& \int_{-\infty}^t U(t,s)P(s)f(s,Y(s,\tau,\xi)+g_{m}(s,(\tau,\xi)))ds\\
     &-\int_{t}^{\infty}U(t,s)(I-P(s))f(s,Y(s,\tau,\xi)+g_{m}(s,(\tau,\xi)))ds
\end{split} \]
  It is not difficult to show that
\[ g_{m}(t,(\tau,\xi))\rightarrow  g(t,(\tau,\xi)), \quad \text{as} \ \ m\rightarrow +\infty, \]
  uniformly with respect to $t, \tau,\eta$.
\\
  Note that $g_{0}(t,(\tau,\xi))=g_{0}(t,(t,Y(t,\tau,\xi)))$.
  Thus, by induction, it is clear that for all $m (m\in\mathbb{N})$,
 $g_{m}(t,(\tau,\xi))=g_{m}(t,(t,Y(t,\tau,\xi)))$.
 Choose $\lambda>0$ sufficiently large and $\beta>0$ sufficiently small such that
$$
\begin{array}{l}
\lambda>\frac{3}{1-\exp \{-\alpha\}}+\frac{3}{2(1-\exp \{\alpha-M\})} ,\\
\beta<\frac{\alpha}{M+C_{\mu}}, \\
0<\frac{2 K C_{r}}{1-\exp \{-(\alpha-M \beta)\}}<\frac{1}{3},
\end{array}
$$
where $\alpha, K$ are given by \eqref{(2.3)}, $C_\mu, C_r$ are positive constants defined in \eqref{(3.1)} and $M=\sup_{t\in\mathbb{R}}\|A(t)\|$.
  Now we first show that if $0<\|\xi-\bar{\xi}\|<1$ for all $m,$ we have
\begin{equation}\label{g-assume-inequality}
\|g_{m}(t,(t, \xi))-g_{m}(t,(t, \bar{\xi}))\|<\lambda\|\xi-\bar{\xi}\|^{\beta}.
\end{equation}
  Obviously, inequality (\ref{g-assume-inequality}) holds if $m=0$. Now making the inductive assumption that (\ref{g-assume-inequality}) holds.
  From (\ref{g-expression}), it follows that
\begin{equation*}
\begin{split}
&g_{m+1}(t,(t,\xi))-g_{m+1}(t,(t,\bar{\xi}))\\
=& \int_{-\infty}^t U(t,s)P(s)[f(s,Y(s,t,\xi)+g_{m}(s,(t,\xi)))
- f(s,Y(s,t,\bar{\xi})+g_{m}(s,(t,\bar{\xi})))]ds \\
&-\int_{t}^{\infty}U(t,s)(I-P(s))[f(s,Y(s,t,\xi)+g_{m}(s,(t,\xi)))
- f(s,Y(s,t,\bar{\xi})+g_{m}(s,(t,\bar{\xi})))]ds \\
\triangleq & J_{1}+J_{2}.
\end{split}
\end{equation*}
We divide $J_{1}, J_{2}$ into two parts:
\[\begin{split}
 J_{1}=& \int_{-\infty}^{t-\tau} + \int_{t-\tau}^{t}\triangleq J_{11}+J_{12},\\
 J_{2}=& \int_{t}^{t+\tau} + \int_{t+\tau}^{\infty}\triangleq J_{21}+J_{22},
\end{split}\]
where $\tau=\frac{1}{M+C_\mu} \ln \frac{1}{\|\xi-\bar{\xi}\|}$. By \eqref{(2.3)}, \eqref{(2.5)} and \eqref{(3.1)}, we have
$$
\begin{aligned}
\|J_{11}\| & \leq \int_{-\infty}^{t-\tau} K \exp \{-\alpha(t-s)\} 2 \mu(s) d s \\
& =\sum_{m\in [0,\infty)}\int_{t-\tau-m-1}^{t-\tau-m} 2K \mu(s)\exp\{-\alpha(t-s) \}ds \\
&\leq \sum_{m\in [0,\infty)} 2KC_\mu \exp\{-\alpha(\tau+m) \} \\
& \leq 2KC_\mu \exp\{-\alpha \tau\}(1-\exp\{-\alpha\})^{-1} \\
& \leq \frac{2 K C_\mu}{1-\exp \{-\alpha\}}\|\xi-\bar{\xi}\|^{\frac{\alpha}{M+C_\mu}}, \\
\end{aligned}
$$
and similarly,
$$
\begin{aligned}
\|J_{22}\| & \leq \int_{t+\tau}^{+\infty} K \exp \{-\alpha(t-s)\} 2 \mu(s) d s \\
& \leq \frac{2 K C_\mu}{1-\exp \{-\alpha\}}\|\xi-\bar{\xi}\|^{\frac{\alpha}{M+C_\mu}}.
\end{aligned}
$$
When $0<\|\xi-\bar{\xi}\|<1$, $s\in [t-\tau,t]$ and since $\|Y(t,t_{0},y)-Y(t,t_{0},\bar{y})\|\leq \|y-\bar{y}\|\exp\{M|t-t_{0}|\}$,
we can get
$$
\begin{aligned}
 \|Y(s,t,\xi)-Y(s,t,\bar{\xi})\| &\leq \|\xi-\bar{\xi}\|\cdot \exp\{M|t-s| \} \\
 &\leq \|\xi-\bar{\xi}\|\cdot \exp\{M\tau \} \\
 &\leq \|\xi-\bar{\xi}\|^{\frac{M}{M+C_\mu}} <1.
\end{aligned}
$$
Hence, it is easy to see that
$$
\begin{aligned}
 \|g_{m}(s,(t,\xi))-g_{m}(s,(t,\bar{\xi}))\| &=\|g_{m}(s,(s,Y(s,t,\xi)))-g_{m}(s,(s,Y(s,t,\bar{\xi})))\| \\
 &\leq \lambda \|\xi-\bar{\xi}\|^{\beta}\cdot \exp \{ M\beta|t-s|\},
\end{aligned}
$$
Therefore, we have that
$$
\begin{aligned}
\|J_{12}\| \leq & \int_{t-\tau}^{t} K \exp \{-\alpha(t-s)\} r(s)[\|\xi-\bar{\xi}\|\\
&\left.\cdot \exp \{M(t-s)\}+\lambda \|\xi-\bar{\xi}\|^{\beta} \cdot \exp \{M \beta(t-s)\}\right] d s \\
=& \int_{t-\tau}^{t} K \exp \{(M-\alpha)(t-s)\} r(s)\|\xi-\bar{\xi}\| d s \\
&+\int_{t-\tau}^{t} \lambda K\|\xi-\bar{\xi}\|^{\beta} \cdot r(s) \cdot \exp \{(M \beta-\alpha)(t-s)\} d s \\
=& \sum_{m \in[0,[\tau]]} K \int_{t-\tau+m}^{t-\tau+m+1} \exp \{(M-\alpha)(t-s)\} r(s)\|\xi-\bar{\xi}\| d s \\
&+\sum_{m \in[0,[\tau]]} K \lambda \int_{t-\tau+m}^{t-\tau+m+1} \exp \{(M \beta-\alpha)(t-s)\} r(s)\|\xi-\bar{\xi}\|^{\beta} d s \\
\leq & \sum_{m \in[0,[\tau]]} K C_r \exp \{(M-\alpha)(\tau-m)\}\|\xi-\bar{\xi}\| \\
&+\sum_{m \in[0,[\tau]]} C_r K \lambda \exp \{(M \beta-\alpha) \tau\}\|\xi-\bar{\xi}\|^{\beta} \exp \{(M \beta-\alpha)(-m-1)\} \\
\leq & C_r K \exp \{(M-\alpha) \tau\} \frac{1}{1-\exp \{-(M-\alpha)\}}\|\xi-\bar{\xi}\| \\
&+K C_r \lambda \exp \{(M \beta-\alpha) \tau\} \frac{\exp \{\alpha-M \beta\}(1-\exp\{(\alpha-M\beta)[\tau]\})}{1-\exp \{\alpha-M \beta\}}\|\xi-\bar{\xi}\|^{\beta} \\
\leq & C_r K \exp \left\{(M-\alpha) \frac{-1}{M+C_\mu} \ln \|\xi-\bar{\xi}\|\right\}
 \times \frac{1}{1-\exp \{-(M-\alpha)\}}\|\xi-\bar{\xi}\| \\
&+K C_r \lambda \exp \{(M \beta-\alpha) \tau\} \frac{\exp \{(\alpha-M \beta)\tau\}}{1-\exp \{-(\alpha-M \beta)\}}\|\xi-\bar{\xi}\|^{\beta} \\
=& C_r K\|\xi-\bar{\xi}\|^{\frac{\alpha+C_\mu}{M+C_\mu}} \cdot \frac{1}{1-\exp \{-(M-\alpha)\}}
+K C_r \lambda \frac{1}{1-\exp \{-(\alpha-M \beta)\}}\|\xi-\bar{\xi}\|^{\beta},
\end{aligned}
$$
Note that $M-\alpha>0,-\alpha+M \beta<0$ imply that $\exp \{(M \beta-\alpha) \tau\}<1$ and $\beta<\frac{\alpha+C_\mu}{M+C_\mu}$.
Then
$$
\begin{aligned}
\|J_{12}\| \leq & K C_{r}\|\xi-\bar{\xi}\|^{\beta} \cdot \frac{1}{1-\exp \{-(M-\alpha)\}}
+K C_{r} \lambda \frac{1}{1-\exp \{-(\alpha-M \beta)\}}\|\xi-\bar{\xi}\|^{\beta} \\
=& K C_{r}\left[\frac{1}{1-\exp \{\alpha-M\}}+\frac{\lambda}{1-\exp \{-(\alpha-M \beta)\}}\right]\|\xi-\bar{\xi}\|^{\beta}.
\end{aligned}
$$
Similar arguments lead to
$$
\|J_{21}\| \leq K C_{r}\left[\frac{1}{1-\exp \{\alpha-M\}}+\frac{\lambda}{1-\exp \{-(\alpha-M \beta)\}}\right]\|\xi-\bar{\xi}\|^{\beta}.
$$
Hence,
\[
\begin{split}
&\|g_{m+1}(t,(t, \xi))-g_{m+1}(t,(t, \bar{\xi}))\| \\
\leq&\left[\frac{4 K C_{\mu}}{1-\exp \{-\alpha\}}+\frac{2 K C_{r}}{1-\exp \{\alpha-M\}}+\frac{2 K C_{r} \lambda}{1-\exp \{-(\alpha-M \beta)\}}\right] \|\xi-\bar{\xi}\|^{\beta} \\
\leq& \lambda\|\xi-\bar{\xi}\|^{\beta}.
\end{split}
\]
Now by the definition of $G(t, y),$ if $0<\|y-\bar{y}\|<1,$  then we conclude that
$$
\|G(t, y)-G(t, \bar{y})\| \leq\|y-\bar{y}\|+\lambda\|y-\bar{y}\|^{\beta} \leq(1+\lambda)\|y-\bar{y}\|^{\beta}=q\|y-\bar{y}\|^{\beta}.
$$
  Therefore, $G$ is H\"{o}lder continuous. This completes the proof of Step 1-2.
\end{proof}

\section*{Conflict of interest statement}
 The authors declare that there is no conflict of interests regarding the publication of this article.

\section*{Acknowledgements}
This work was jointly supported by Fondecyt project 1170466 and the National Natural
Science Foundation of China under Grant (11931016).


\begin{thebibliography} {}

\bibitem{Poincare}
H. Poincar\'{e}, Sur le probl\`{e}me des trois corps et les \'{e}quations de la dyanamique, {\em Acta Math.}, 13 (1890) 1--270.

\bibitem{Siegel}
C.  Siegel, Iteration of analytic functions, {\em Ann. of Math.}, 43 (1942) 607--612.

\bibitem{Yoccoz}
J. Yoccoz, Lin\'{e}arisation des germes de diff\'{e}omorphismes holomorphes $d(\mathbb{C},0)$, {\em C. R. Acad. Sci. Paris},
36 (1988) 55--58.



\bibitem{Hartman} P. Hartman, On local homeomorphisms of Euclidean spaces, {\em Bol. Soc. Mat. Mexicana}, 5 (1960) 220--241.

\bibitem{Grobman}
D. Grobman, Homeomorphisms of systems of differential equations, {\em Dokl. Akad. Nauk SSSR}, 128 (1965) 880--881.

\bibitem{Palis}
J. Palis, On the local structure of hyperbolic points in Banach spaces, {\em An. Acad. Brasil. Ci\^{e}nc.}, 40 (1968)
263--266.

\bibitem{Pugh1}
C. Pugh, On a theorem of P. Hartman, {\em Amer. J. Math.}, 91 (1969) 363--367.

\bibitem{Lu2}
P. Bates, K. Lu, A Hartman-Grobman theorem for the Cahn-Hilliard and phase-field equations, {\em J. Dynam. Differential Equations}, 6 (1994) 101--145.

\bibitem{Lu1}
K. Lu, A Hartman-Grobman theorem for scalar reaction diffusion equations, {\em J. Differential Equations}, 93 (1991) 364--394.


\bibitem{Zgl-HG}P. Zgliczy\'nski, Topological shadowing and the Grobman-Hartman theorem, {\em Topol. Method. Nonl. An.},
  50 (2017), 757-785.

\bibitem{Hein-Pruss1}
M. Hein, J. Pr\"{u}ss, The Hartman-Grobman theorem for semilinear hyperbolic evolution equations, {\em J. Differential Equations}, 261 (2016) 4709--4727.

\bibitem{Palmer1}
K. Palmer, A generalization of Hartman's linearization theorem, {\em J. Math. Anal. Appl.}, 41 (1973) 753--758.

\bibitem{BDK-JDE}
L. Backes, D. Dragi\v{c}evi\'{c}, K. Palmer, Linearization and H\"{o}lder continuity for nonautonomous systems,
{\em J. Differential Equations}, 297 (2021) 536--574.

\bibitem{B-V1}
L. Barreira, C. Valls, A Grobman-Hartman theorem for nonuniformly hyperbolic dynamics, {\em J. Differential Equations}, 228 (2006) 285--310.

\bibitem{B-V2}
L. Barreira, C. Valls, A Grobman-Hartman theorem for general nonuniform exponential dichotomies, {\em J. Funct. Anal.}, 257 (2009) 1976--1993.

\bibitem{B-V3}
L. Barreira, C. Valls, Conjugacies between linear and nonlinear non-uniform contractions,
{\em Ergod. Theor. Dyn. Syst.}, 28 (2008) 1--19.


\bibitem{B-V4}
L. Barreira, C. Valls, Conjugacies for linear and nonlinear perturbations of nonuniform behavior, {\em J. Funct. Anal.}, 253 (2007) 324--358.


\bibitem{Huerta1}
\'{A}. Casta\~{n}eda, I. Huerta, Nonuniform almost reducibility of nonautonomous linear differential equations,
  {\em J. Math. Anal. Appl.}, 485 (2020) 123822.

\bibitem{Huerta2}
I. Huerta, Linearization of a nonautonomous unbounded system with nonuniform contraction: A spectral approach,
 {\em Discrete Contin. Dyn. Syst.}, 40 (2020) 5571--5590.

\bibitem {Jiang1}
L. Jiang, Generalized exponential dichotomy and global linearization, {\em J. Math. Anal. Appl.}, 315 (2006) 474--490.

\bibitem{Jiang2}
L. Jiang, Ordinary dichotomy and global linearization, {\em Nonlinear Anal.}, 70 (2009) 2722--2730.

\bibitem{Fenner-Pinto}
J. Fenner, M. Pinto, On a Hartman linearization theorem for a class of ODE with impulse effect, {\em Nonlinear Anal.}, 38 (1999) 307--325.

\bibitem{Xia1}
Y. Xia, X. Chen, V. Romanovski, On the linearization theorem of Fenner and Pinto, {\em J. Math. Anal. Appl.}, 400 (2013) 439--451.

\bibitem{Papa-A} G. Papaschinopoulos, A linearization result for a differential equation with
piecewise constant argument, {\it Analysis}, 16 (1996) 161--170.

\bibitem{Potzche1}
C. P\"{o}tzche, Topological decoupling, linearization and perturbation on inhomogeneous time scales, {\em J. Differential Equations}, 245 (2008) 1210--1242.

\bibitem{Reinfelds1}
A. Reinfelds, L. Sermone, Equivalence of nonlinear differential equations with impulse effect in Banach space, {\em Latv. Univ. Zint. Raksti.}, 577 (1992) 68--73.

\bibitem{Reinfelds-IJPAM}
A. Reinfelds, D. \v{S}teinberga, Dynamical equivalence of quasilinear equations, {\em Int. J. Pure Appl. Math.}, 98 (2015) 355-364.

\bibitem{Shi-Zhang1}
J. Shi, J. Zhang, The Principle of Classification for Differential Equations,  Science Press, Beijing, 2003 (in Chinese).

\bibitem{Xia-BSM} Y. Xia, R. Wang, K. Kou, D. O'Regan,
 On the linearization theorem for nonautonomous differential equations,
{\em Bull. Sci. Math.}, 139 (2015) 829--846.

\bibitem{Sternberg1}
S. Sternberg, Local $C^n$ transformations of the real line, {\em Duke Math. J.}, 24 (1957) 97--102.

\bibitem{Sternberg2}
S. Sternberg, Local contractions and a theorem of Poincar\'{e}, {\em Amer. J. Math.}, 79(1957) 809--824.

\bibitem{Sell1}
G. Sell, Smooth Linearization near a fixed point,  {\em Amer. J. Math.}, 107 (1985) 1035--1091.


\bibitem{Belitskii1}
G. Belitskii, Functional equations and the conjugacy of diffeomorphism of finite smoothness class, {\em Funct. Anal. Appl.}, 7 (1973) 268--277.

\bibitem{Belitskii3} G. Belitskii, V. Rayskinon, The Grobman-Hartman theorem in $\alpha$-H\"older class for Banach spaces, preprint.


\bibitem{Cuong} L. Cuong, T. Doan, S. Siegmund,  A Sternberg theorem for nonautonomous differential
equations, {\em J. Dynam. Differential Equations}, 31 (2019) 1279--1299.

\bibitem{ZWN-MZ}
D. Dragi\v{c}evi\'{c}, W. Zhang, W. Zhang, Smooth linearization of nonautonomous difference equations with a nonuniform dichotomy, {\em Math. Z.}, 292 (2019) 1175--1193.

\bibitem{DZZ-PLMS}
D. Dragi\v{c}evi\'{c}, W. Zhang, W. Zhang,  Smooth linearization of nonautonomous differential equations with a nonuniform dichotomy, {\em Proc. London Math. Soc.}, 121 (2020) 32--50.

\bibitem{ElBialy1}
M. ElBialy, Local contractions of Banach spaces and spectral gap conditions, {\em J. Funct. Anal.}, 182 (2001) 108--150.


\bibitem{R-S-1}
H. Rodrigues, J. Sol\`{a}-Morales, Linearization of class $C^{1}$ for contractions on Banach spaces, {\em J. Differential Equations}, 201 (2004) 351--382.

\bibitem{R-S-2}
H. Rodrigues, J. Sol\`{a}-Morales, Smooth linearization for a saddle on Banach spaces, {\em J. Dynam. Differential Equations}, 16 (2004) 767--793.

\bibitem{R-S-3}
H. Rodrigues, J. Sol\'a-Morales, Invertible Contractions and Asymptotically Stable ODE'S
that are not $C^1$-Linearizable, {\em J. Dynam. Differential Equations}, 18 (2006) 961--974.


\bibitem{ZWN-JDE}
W. Zhang, W. Zhang, Sharpness for $C^1$ linearization of planar hyperbolic diffeomorphisms,  {\em J. Differential Equations},  257 (2014) 4470--4502.

\bibitem{ZWN-ETDS} W. Zhang, W. Zhang, $\alpha$-H\"older  linearization of hyperbolic diffeomorphisms with resonance, {\em Ergod. Theor. Dyn. Syst.}, 36 (2016) 310--334.


\bibitem{ZWN-MA} W. Zhang, W. Zhang, W. Jarczyk, Sharp regularity of linearization for $C^{1,1}$ hyperbolic diffeomorphisms, {\em  Math. Ann.},  358 (2014) 69--113.

\bibitem{ZWN-TAMS} W. Zhang, K. Lu, W. Zhang,  Differentiability of the conjugacy in  the Hartman-Grobman Theorem, {\em Trans. Amer. Math. Soc.}, 369 (2017)  4995--5030.

\bibitem{Tan-JDE}
B. Tan, $\sigma$-H\"{o}lder continuous linearization near hyperbolic fixed points in $\mathbb{R}^{n}$, {\em J. Differential Equations},
162 (2000) 251--269.

\bibitem{Shi}
J. Shi, K. Xiong, On Hartman's linearization theorem and Palmer's linearization theorem, {\it J. Math. Anal. Appl.}, 192 (1995) 813--832.



\bibitem{Pinto1}
R. Naulin, M. Pinto, Admissible perturbations of exponential dichotomy roughness. {\em Nonlinear Anal.}, 31 (1998) 559--571.

\bibitem{Pinto2}
A. Coronel, C. Maul\'en, M. Pinto, D. Sep\'ulveda,
Dichotomies and asymptotic equivalence in alternately advanced and delayed differential systems,
{\em J. Math. Anal. Appl.},  45 (2017) 1434--1458.

\bibitem{Pinto3}
M. Pinto, Perturbations of asymptotically stable differential systems
{\em Analysis}, 4 (1984) 161--175.

\bibitem{Pinto4}
M. Pinto, Asymptotic integration of a system resulting from the perturbation of an $h$--system,
{\em J. Math. Anal. Appl.}, 131 (1988) 194--216.

\bibitem{[4]} W. Coppel,
\emph{Dichotomies in Stability Theory},
Lect. Notes Math., vol. 629, Springer, Berlin/New York (1978).

\bibitem{Meiss-book}
J. Meiss, \emph{ Differential Dynamical Systems}, Society for Industrial and Applied Mathematics, (2007).

\bibitem{ZWN-Chinese}
W. Zhang, Generalized exponential dichotomies and invariant manifolds for differential equations, {\em Adv. Math. Chin.}, 22 (1993) 1-45.

\bibitem{Xia-Zhang} J. Chu, F. Liao, S. Siegmund, Y. Xia, W. Zhang, Nonuniform dichotomy spectrum and reducibility for nonautonomous equations, {\em Bull. Sci. Math.},139(2015),538-557.




















\end{thebibliography}
\end{document}